\newtheorem{theorem}{Theorem}[section]
\newtheorem{lemma}[theorem]{Lemma}
\newtheorem{claim}[theorem]{Claim}
\newtheorem{conjecture}[theorem]{Conjecture}
\newtheorem{question}[theorem]{Question}
\theoremstyle{definition}
\begin{document}

\title[Characterization of 3-punctured spheres]{Characterization of 3-punctured spheres in non-hyperbolic link exteriors}

\author{Mario Eudave-Mu\~{n}oz}
\address{Instituto de Matem\'{a}ticas, Universidad Nacional Aut\'onoma de M\'exico, Circuito Exterior, Ciudad Universitaria, 04510 M\'exico D.F.,  MEXICO}
\email{mario@matem.unam.mx}
\thanks{The first author is partially supported by grant PAPIIT-UNAM IN101317}

\author{Makoto Ozawa}
\address{Department of Natural Sciences, Faculty of Arts and Sciences, Komazawa University, 1-23-1 Komazawa, Setagaya-ku, Tokyo, 154-8525, Japan}
\email{w3c@komazawa-u.ac.jp}
\thanks{The second author is partially supported by Grant-in-Aid for Scientific Research (C) (No. 26400097, 16H03928, 17K05262), The Ministry of Education, Culture, Sports, Science and Technology, Japan}

\subjclass[2010]{Primary 57M25; Secondary 57M27}

\keywords{link, punctured sphere, cabling conjecture, incompressible surface, essential surface, multibranched surface}

\begin{abstract}
In this paper, we characterize non-hyperbolic 3-component links in the 3-sphere whose exteriors contain essential 3-punctured spheres with non-integral boundary slopes.
%As its corollary, we show that if the first homology group of a multibranched surface with only one 3-punctured sphere sector is torsion-free, then it can be embedded into the 3-sphere(?).
We also show the existence of embeddings of some multibranched surfaces in the 3-sphere which satisfy some homological conditions to be embedded in the 3-sphere.
\end{abstract}

\maketitle

%\tableofcontents

\section{Introduction}

\subsection{Characterization of 3-punctured spheres with non-integral boundary slopes}

The cabling conjecture (\cite{GS1986}) is a deepest difficult problem in knot theory, which states that if a Dehn surgery on a knot in the 3-sphere yields a reducible 3-manifold, then it is a cable knot and the surgery slope is given by the boundary slope of the cabling annulus.
This conjecture follows the next stronger conjecture; there does not exist an essential $n$-punctured sphere $(n\ge 3)$ with non-meridional boundary slope in any knot exterior in the 3-sphere.

In this paper, we consider an extended problem related to the above stronger conjecture, that is, for a given link exterior in the 3-sphere, does there exist an essential $n$-punctured sphere with non-meridional boundary slope?
However, unlike knot exteriors, there are plenty of essential $n$-punctured spheres with integral boundary slope.
Accordingly, we concentrate essential on $n$-punctured spheres with non-meridional, non-integral boundary slopes.

For hyperbolic links, we propose the following conjecture.

\begin{conjecture}\label{extended}
There does not exist an essential $n$-punctured sphere with non-meridional, non-integral boundary slope in a hyperbolic link exterior in the 3-sphere.
\end{conjecture}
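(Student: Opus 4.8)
The plan is to turn a hypothetical essential punctured sphere into a reducing sphere for a Dehn filling of $E(L)$ and then invoke the theory of exceptional Dehn fillings on hyperbolic $3$-manifolds. Assume $F\subset E(L)$ is an essential $n$-punctured sphere whose boundary slope on each component of $L$ that it meets is non-meridional and non-integral. \emph{Capping off:} fill every boundary torus met by $\partial F$ along the corresponding boundary slope, so that $\partial F$ is capped off by meridian disks and $F$ becomes a $2$-sphere $\hat F$ in the filled manifold $\hat M$. By the standard capping-off argument for essential surfaces under Dehn filling, using that $F$ is incompressible and boundary-incompressible, either $\hat F$ is essential in $\hat M$, so that $\hat M$ is reducible, or one is in a tightly constrained degenerate case in which $\hat F$ bounds a ball swallowing some of the filling solid tori; in the latter case $F$ cobounds, with part of $\partial E(L)$, a region that becomes a ball after filling, and the finitely many resulting pictures can be excluded directly.

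First I would arrange that $\partial F$ meets every component of $L$: if it misses a component, fill that component meridionally (which keeps one inside exteriors of links in $S^3$) and, with the usual innermost-disk care, keeps $F$ essential. The genuine subtlety here is that meridional filling of a hyperbolic $E(L)$ need not remain hyperbolic; when it does not, the resulting link exterior can be dealt with by a direct analysis, as in the non-hyperbolic case treated elsewhere in this paper. So we may assume $F$ meets all $|L|$ cusps, and the two fillings to be compared are the reducible multislope $\gamma=(\gamma_1,\dots,\gamma_{|L|})$ and the meridional multislope $\mu=(\mu_1,\dots,\mu_{|L|})$, which yields $S^3$.

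The core claim is that such a reducible filling of the hyperbolic manifold $E(L)$ must satisfy $\Delta(\gamma_i,\mu_i)\le 1$ on every cusp, where $\Delta$ denotes geometric intersection number of slopes --- that is, each $\gamma_i$ is integral, contradicting the hypothesis. For a single cusp this is the known fact that a reducible Dehn surgery on a knot in $S^3$ has integral slope, proved through the combinatorics of the intersection graph of the reducing sphere with the meridian disk of the surgery solid torus (Scharlemann cycles, parity arguments, planarity of the sphere). I would run the same machine: since $\hat F$ is a sphere, the intersection graph it carries and those carried by the meridian disks of the filling solid tori are all planar, so the usual Euler-characteristic and Scharlemann-cycle estimates apply, and the constraint that $S^3$ contains no essential sphere (coming from the meridional filling) is fed in to bound the number of parallel edges and hence $\Delta(\gamma_i,\mu_i)$.

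The main obstacle is precisely what separates links from knots. For $|L|\ge 2$ a non-integral slope on a single component can still yield $S^3$ (a Rolfsen twist along an unknotted-in-the-complement component) or a reducible manifold, so integrality of $\gamma_i$ cannot follow from a purely local analysis at the $i$-th cusp; it must use the hyperbolicity of $E(L)$ globally, to exclude exactly the twist-type configurations. Getting the Gordon--Luecke intersection-graph argument to cooperate with several cusps at once --- in particular, tracking Scharlemann cycles that may pass between the meridian disks of different filling solid tori, and using hyperbolicity to kill the configurations in which some $\gamma_i$ behaves like a twisting slope --- is where I expect essentially all of the difficulty to lie. The embedded multibranched surfaces studied in the remainder of the paper (obtained by capping $F$ off with the relevant meridian or cabling disks, subject to the homological conditions for embeddability in $S^3$) look like the natural bookkeeping device for organizing these cases.
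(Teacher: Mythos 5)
There is a fundamental mismatch here: the statement you are addressing is Conjecture~\ref{extended}, which the paper explicitly leaves \emph{open}. The authors prove it only in the knot case, by citing Gordon--Luecke \cite{GL1987}, and the body of the paper is devoted to the complementary \emph{non-hyperbolic} situation (Theorems~\ref{characterization} and \ref{characterizationtorus}). So there is no ``paper's own proof'' to compare against, and your text should be judged as a proposed attack on an open problem.

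As such, it is a reasonable survey of the relevant machinery but not a proof, and the gap is exactly where you locate it yourself. Your ``core claim'' --- that a reducible (or $S^3$) filling of the hyperbolic manifold $E(L)$ forces $\Delta(\gamma_i,\mu_i)\le 1$ on every cusp --- is not a lemma you derive; it is a restatement of the conjecture after capping off. The Gordon--Luecke intersection-graph argument is genuinely a one-cusp argument: it compares two fillings of a manifold with a \emph{single} torus boundary component, and its planarity and Scharlemann-cycle counts rely on all fat vertices of the graph on $\hat F$ coming from meridian disks of a single filling solid torus with a coherent labelling. With $|L|\ge 2$ cusps filled simultaneously, the graph on $\hat F$ has several families of vertices, Scharlemann cycles can involve edges joining vertices from different filling tori, and no analogue of the parity/ordering structure is available; moreover, as you note, non-integral slopes on individual components genuinely can yield $S^3$ or reducible manifolds (Rolfsen twists), so integrality cannot be forced cusp-by-cusp. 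You say hyperbolicity must ``kill the twist-type configurations'' but give no mechanism for how it enters the combinatorics --- and that mechanism is the entire content of the conjecture. Additionally, the preliminary reduction ``fill the components missed by $\partial F$ meridionally and keep $F$ essential'' is itself not automatic: meridional filling can compress $F$ or destroy hyperbolicity, and deferring that case to ``a direct analysis as in the non-hyperbolic case'' assumes the conclusion there. In short, nothing in the proposal goes beyond what is already known (the knot case), and the statement remains a conjecture.
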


By \cite{GL1987}, any knot exterior in the 3-sphere does not contain an essential $n$-punctured sphere with non-meridional, non-integral boundary slope.
Thus, Conjecture \ref{extended} holds for knots.
%Furthermore, for 2-component links, we have the next proposition.

%\begin{proposition}\label{2-component}
%The exterior of a 2-component link in $S^3$ does not contain a 3-punctured sphere with non-meridional, non-integral slope.
%\end{proposition}

Yoshida classfied totally geodesic 3-punctured spheres in an orientable hyperbolic 3-manifold with finitely many cusps (\cite{Y2017}).
This result supports Conjecture \ref{extended}, but we remark that it does not give a complete solution since two hyperbolic link in the 3-sphere might have a same exterior.

A non-hyperbolic link will have an essential annulus or an essential torus in its exterior. For a link $L$ we denote by $N(L)$ a regular neighborhood of $L$, and by $E(L)=S^3-int N(L)$, the exterior of $L$. We have the following characterizations.

\begin{theorem}[Annular case]\label{characterization}
Let $L=C_1\cup C_2\cup C_3$ be a 3-component  link in the 3-sphere $S^3$ whose exterior contains an essential annulus,
and let $P$ be a 3-punctured sphere in the exterior of $L$ such that $l_i\subset \partial N(C_i)$ has a non-meridional, non-integral slope, where $\partial P=l_1\cup l_2\cup l_3$.
Then one of the following cases holds.
\begin{description}
\item[Case 1] $C_1$ is a torus knot contained in a torus $T$, where $T$ decomposes $S^3$ into two solid tori $V_2$ and $V_3$. $C_2$ and $C_3$ are cores of $V_2$ and $V_3$ respectively. 
Let $A$ be an annulus connecting $\partial N(C_2)$ and $\partial N(C_3)$ such that $A$ intersects $T$ in a loop which intersects $C_1$ in one point.
Let $A'$ be an annulus connecting $\partial N(C_2)$ and $\partial N(C_3)$ such that $A$ intersects $T$ in a loop $C_1$.
For $i=2,3$, put $A_i=A\cap V_i\cap E(C_1)$ and $A_i'=A'\cap V_i\cap E(C_1)$, and let $P_i$ be an annulus obtained from $A_i$ by twisting along $A_i'$ for $i=2,3$.
Then $P=P_2\cup P_3$.

\begin{figure}[htbp]
	\begin{center}
	\begin{tabular}{cc}
	\includegraphics[trim=0mm 0mm 0mm 0mm, width=.45\linewidth]{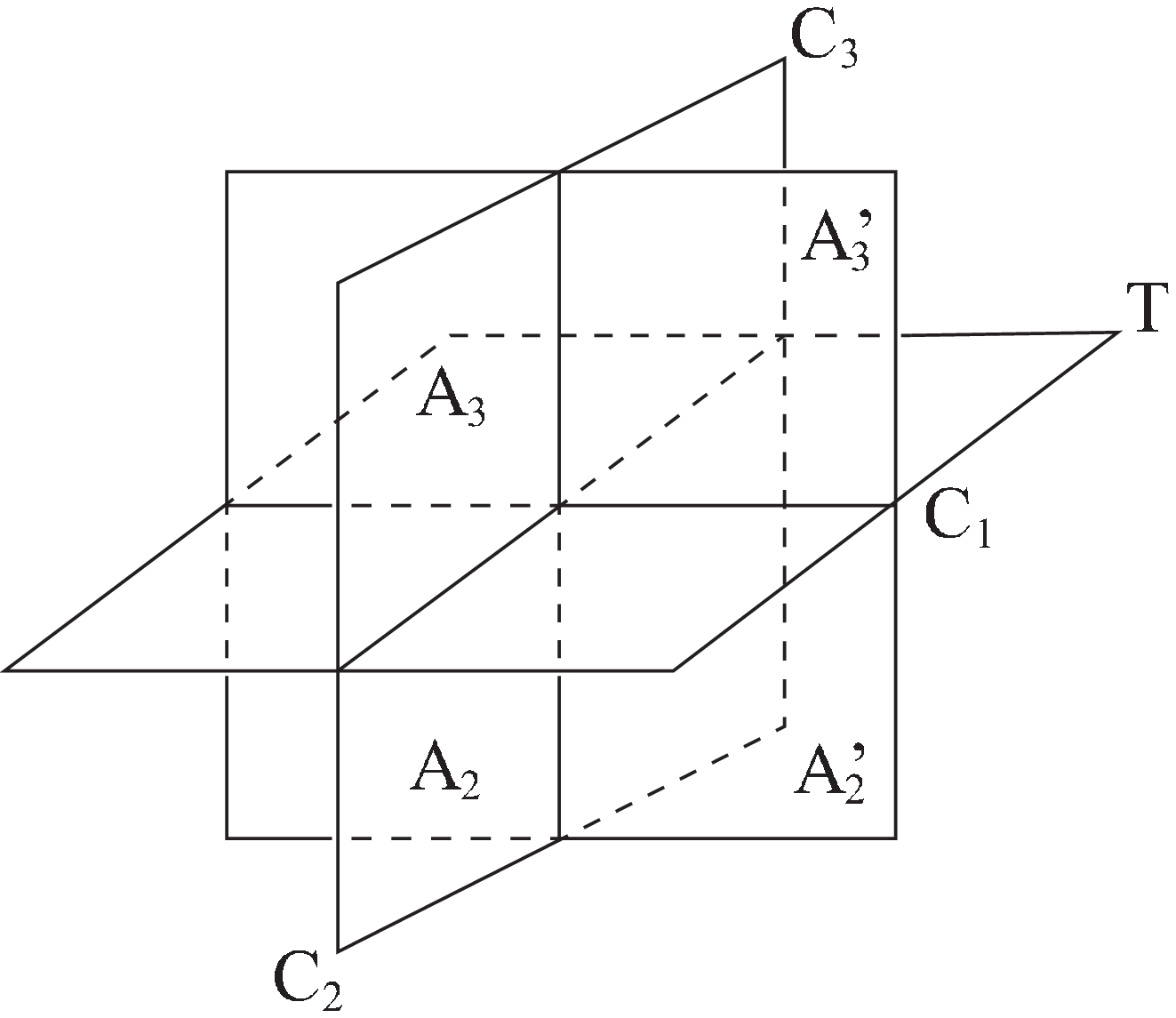}&
	\includegraphics[trim=0mm 0mm 0mm 0mm, width=.45\linewidth]{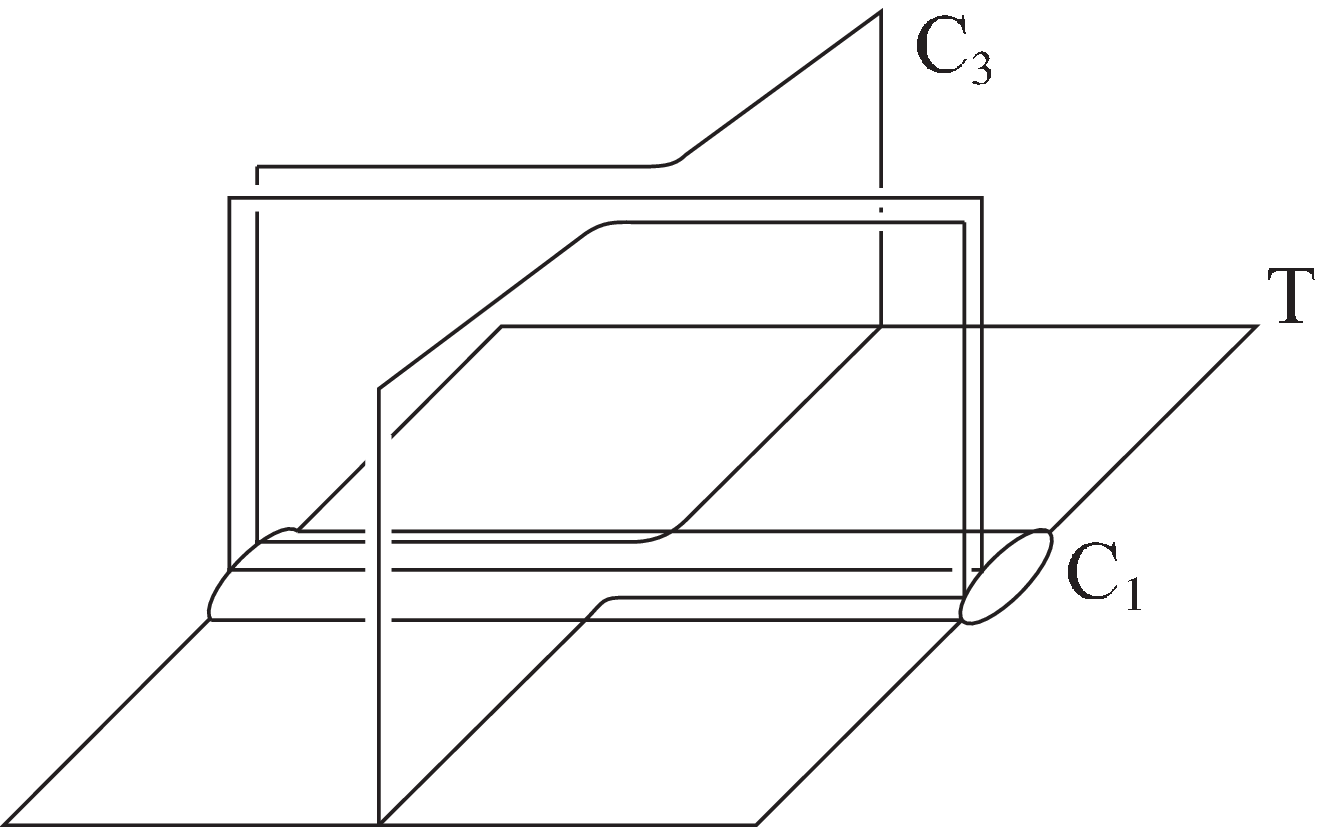}\\
	Configuration of $T,A_i,A_i', C_i$ & $P_3=A_3+2A_3'$
	\end{tabular}
	\end{center}
	\caption{Case 1}
	\label{case1}
\end{figure}

\item[Case 2] $C_2$ is any knot and $C_3$ is any cable knot of $C_2$.
Put $T=\partial N(C_3)$, where $T$ decomposes $S^3$ into a solid torus $V_3$ and the exterior of $C_3$, say $V_2$.
Let $A$ be an annulus connecting $\partial N(C_2)$ and $\partial N(C_3)$ such that $A$ intersects $T$ in a loop, say $\lambda$.
Let $A'$ be an annulus connecting $C_1$ and $\partial N(C_3)$ such that $A'$ intersects $A$ in an arc.
$C_1$ is a cable knot of $C_3$ which is contained in $T$ and intersects $\lambda$ in one point.
Put $A_i=A\cap V_i\cap E(C_1)$ for $i=2,3$, $A_3'=A'\cap V_3\cap E(C_1)$, $P_2=A_2$, and let $P_3$ be an annulus obtained from $A_3$ by twisting along $A_3'$.
Then $P=P_2\cup P_3$.

\begin{figure}[htbp]
	\begin{center}
	\begin{tabular}{cc}
	\includegraphics[trim=0mm 0mm 0mm 0mm, width=.35\linewidth]{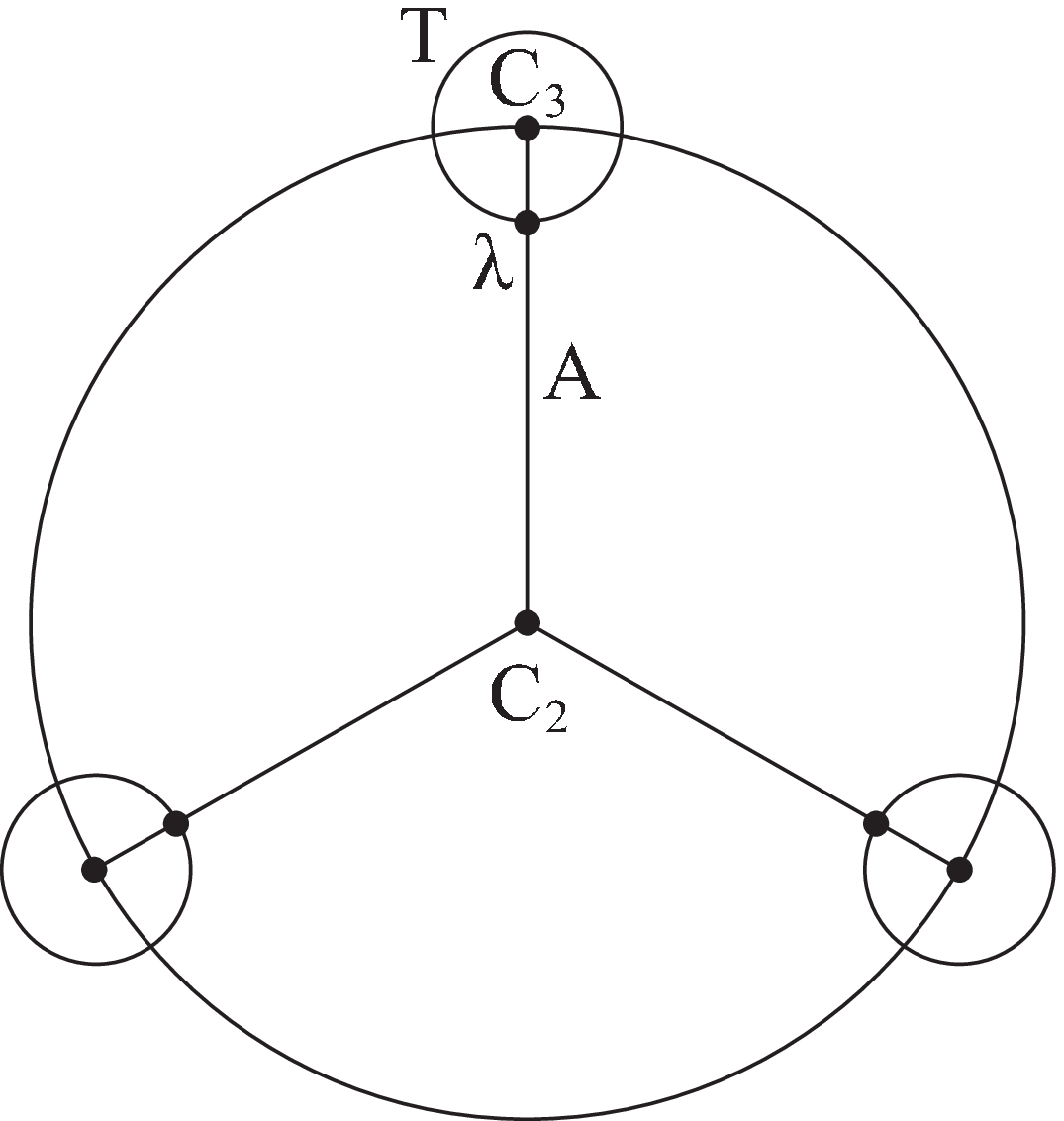}&
	\includegraphics[trim=0mm 0mm 0mm 0mm, width=.45\linewidth]{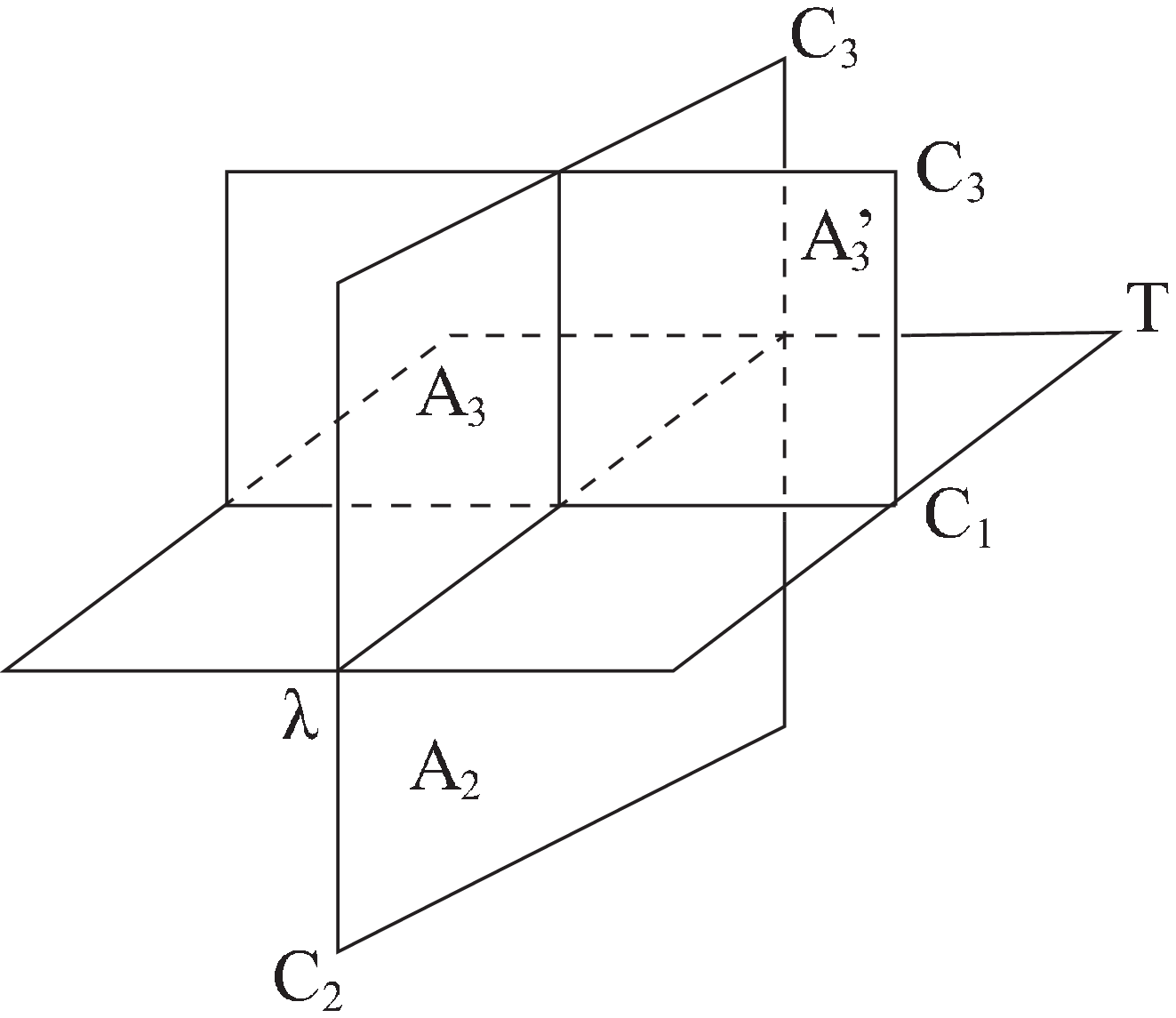}\\
	Configuration of $C_2,C_3,T,A,\lambda$ & Configuration of $T,A_i,A_i', C_i$
	\end{tabular}
	\end{center}
	\caption{Case 2}
	\label{case2}
\end{figure}

\end{description}
\end{theorem}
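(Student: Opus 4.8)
The plan is to exploit an essential annulus $\mathcal{A}\subset E(L)$ (supplied by the hypothesis) to force the position of $L$ in $S^{3}$, and then to recover $P$ from the resulting decomposition. \emph{Reductions.} First, $L$ is non-split: a splitting sphere $S$ would meet the essential $3$-punctured sphere $P$ in circles which, being inessential in $P$ (the only essential simple closed curves of a $3$-punctured sphere are boundary-parallel, and an innermost one would compress $P$), can be removed by an isotopy; then $P$ lies on one side of $S$, contradicting that it meets all three $\partial N(C_{i})$. Hence $E(L)$ is irreducible and its boundary tori are incompressible. Put $\mathcal{A}$ and $P$ in general position and isotope them to minimize, lexicographically, $|\mathcal{A}\cap P|$ and then $|\partial\mathcal{A}\cap\partial P|$, the latter being the intersection number of essential curves on the tori $\partial N(C_{i})$ with the slopes held fixed.

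\emph{Intersection analysis.} An innermost-disk argument using incompressibility of $\mathcal{A}$ and $P$ and irreducibility of $E(L)$ removes every circle of $\mathcal{A}\cap P$ bounding a disk. A remaining circle is a core of $\mathcal{A}$ and is boundary-parallel in $P$; cutting off the corresponding annulus in $P$ and banding it with a subannulus of $\mathcal{A}$ either lowers $|\partial\mathcal{A}\cap\partial P|$ or displays $\mathcal{A}$ as boundary-parallel, both impossible, so $\mathcal{A}\cap P$ has no circles. Since $P$ is $\partial$-incompressible and each $l_{i}$ is non-meridional and non-integral, no arc of $\mathcal{A}\cap P$ is $\partial$-parallel in $\mathcal{A}$: a $\partial$-compression of $P$ would produce a compressing disk for some $\partial N(C_{i})$ or an essential annulus carrying an integral slope on a $\partial N(C_{i})$ bounded by the non-integral $l_{i}$, and a short argument with $\mathcal{A}$ rules out both. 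Hence every arc of $\mathcal{A}\cap P$ is a spanning arc of $\mathcal{A}$, and an outermost-arc argument together with the minimality leaves $\mathcal{A}\cap P$ a single spanning arc (or empty). Cutting $E(L)$ along $\mathcal{A}$ therefore splits $P$ into at most two pieces, each properly embedded in a complementary region of $\mathcal{A}$.

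\emph{Rigidity and reconstruction.} Examining the complementary regions --- equivalently, the JSJ/characteristic decomposition of $E(L)$ --- and re-gluing to recover $S^{3}$, one finds that $L$ must occur in one of two configurations: (i) one component, which I label $C_{1}$, lies on an unknotted torus $T$ splitting $S^{3}$ into solid tori $V_{2}\supset C_{2}$ and $V_{3}\supset C_{3}$; or (ii) after relabeling, $C_{3}$ is a cable curve on $\partial N(C_{2})$ and $C_{1}$ is a cable curve on $T=\partial N(C_{3})$. Here the non-integral slopes do the decisive work. In case (i), the piece $P\cap V_{2}$ is an incompressible planar surface in $V_{2}\setminus N(C_{2})$ meeting $\partial V_{2}$; were $C_{2}$ a non-trivial pattern in $V_{2}$, such a surface would have to be a vertical annulus in a cable-space Seifert fibering of $V_{2}\setminus N(C_{2})$, and its slope on $\partial N(C_{2})$ would then be the \emph{integral} cabling slope (compare the knot case, \cite{GL1987}), a contradiction. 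Hence $C_{2}$, and likewise $C_{3}$, is the core of its solid torus, while non-integrality of the slope on $\partial N(C_{1})$ forces $C_{1}$ to be a non-trivial torus knot on $T$; in particular $E(L)$ is Seifert-fibered and no hyperbolic piece intervenes. Feeding this back, $V_{i}\setminus N(C_{i})\cong T^{2}\times I$, the pieces $P\cap V_{i}$ are the subannuli $A_{i}$ of a first auxiliary annulus $A$, twisted along the subannuli $A_{i}'$ of a second auxiliary annulus $A'$, and matching them along the wall $T\setminus N(C_{1})$ exhibits $P$ in the form $P=P_{2}\cup P_{3}$ of Case 1. The same analysis, with the cable space of $C_{3}\subset\partial N(C_{2})$ in place of $T^{2}\times I$, yields Case 2.

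\emph{Main obstacle.} The crux is this rigidity step --- turning ``$E(L)$ contains an essential annulus'' into these two explicit configurations, with no surviving hyperbolic piece, and then using the non-integral slope condition to eliminate all satellite companions so that the $V_{i}$ really are neighborhoods of core circles and the cable data is exactly as described. The subsidiary difficulty is the bookkeeping --- tracking how $P$ meets $\mathcal{A}$, the wall $T$, and the tori $\partial N(C_{i})$ --- needed to pin down $P$ in the precise form $P=P_{2}\cup P_{3}$ with the correct twisting.
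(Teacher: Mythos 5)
Your overall strategy --- minimize the intersection of $P$ with an essential annulus $\mathcal{A}$, cut, and reconstruct --- is the same as the paper's, but the decisive content of the theorem is concentrated in the step you label ``Rigidity and reconstruction,'' and there you assert the conclusion rather than prove it. The paper obtains the two configurations not from the JSJ decomposition in the abstract, but from an explicit seven-way case analysis on where $\partial\mathcal{A}$ lies and what its slopes are (both components integral on one $\partial N(C_i)$; both meridional; one meridional and one integral on different components; integral/non-integral on different components; both integral on different components; both non-integral on different components; both non-integral on the same component). Each case is either shown impossible (the meridional cases, via the decomposing sphere) or reduced to the case where $\mathcal{A}$ is a cabling/trivial annulus for $C_1$, which produces the torus $T$ and the solid-torus decomposition $S^3=V_2\cup_T V_3$. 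Nothing in your write-up supplies this reduction, and your parenthetical claim that ``no hyperbolic piece intervenes'' is actually false as a statement about $E(L)$: in Case 2 of the theorem $C_2$ is an arbitrary knot, so $E(L)$ may well contain a hyperbolic JSJ piece; what must be shown is that $\mathcal{A}$ and $P$ can be pushed off it, which is exactly the content of the missing case analysis.

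A second, more local gap: you claim that minimality plus an outermost-arc argument reduces $\mathcal{A}\cap P$ to a single spanning arc. Minimality alone cannot do this, because the arcs are essential in both $P$ and $\mathcal{A}$ and cannot be removed by isotopy. The paper's reduction to $n=1$ uses a specific geometric lemma: a rectangular region of $P$ between two consecutive arcs of $P\cap\mathcal{A}$ extends to a compressing disk for $T$ meeting $C_1$ twice, forcing $C_1$ to be a $(2,q)$-cable on $T$; with three or more arcs one gets such disks on both sides of $T$, making $C_1$ a $(2,2)$-``cable'' (a link), and the $n=2$ case is killed by analyzing the outer regions $Q_2,Q_3$ and showing both $C_2$ and $C_3$ would be cores of the same solid torus. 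You need some substitute for this lemma. Finally, your argument that $C_2$ is a core of $V_2$ presumes $V_2\setminus N(C_2)$ is a cable space, i.e., that $C_2$ is a cable pattern; for a general pattern (e.g., hyperbolic in $V_2$) that Seifert-fibered model is unavailable, and one must instead argue, as the paper does, that the boundary curve $\partial Q_2 - l_2$ is parallel to $l_2$ and lies on $T\cup\partial N(C_1)$, which pins down $C_2$ directly.
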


\begin{theorem}[Toroidal case]\label{characterizationtorus}
Let $L=C_1\cup C_2\cup C_3$ be a 3-component  link in the 3-sphere whose exterior contains an essential torus,
and let $P$ be a 3-punctured sphere in the exterior of $L$ such that $l_i\subset \partial N(C_i)$ has a non-meridional, non-integral slope, where $\partial P=l_1\cup l_2\cup l_3$.
Then one of the following cases holds.

\begin{description}
\item[Case 1] The link $L$ also has an essential annulus and then it has a description as in Theorem \ref{characterization}.

\item[Case 2] $L$ and $P$ can be obtained from one of Case 1 or 2  of Theorem \ref{characterization} by a satellite construction, that is, for a trivial loop $l$ disjoint from $L\cup P$, by re-embedding the exterior $E(l)$ of $l$ into $S^3$ so that it is knotted, $L$ and $P$ are obtained.

\item[Case 3] $L$ and $P$ can be obtained from a satellite construction of a possible hyperbolic link $L'$ admitting a 3-punctured sphere $P'$.
\end{description}
\end{theorem}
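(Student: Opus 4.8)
The plan is to deduce the toroidal case from the annular case (Theorem~\ref{characterization}) by recognising an essential torus as a companion torus and peeling off the resulting solid torus. First I would dispose of Case~1: if $E(L)$ happens to contain an essential annulus, Theorem~\ref{characterization} applies verbatim. So from now on assume $E(L)$ is anannular, and the goal is Case~2 or Case~3. Take the given essential torus $T$ and write $S^3=V\cup_T W$ with $V$ a solid torus. If $L$ is not isotopic into $V$, then either some component of $L$ essentially crosses $T$ or $L$ lies in $W$; the latter is impossible ($W\subset E(L)$ would compress $T$ via a meridian disk of $V$), and in the former case a careful minimal-position analysis of $L$ and $P$ against $T$ — using that no component of $L$ can be a core of a solid-torus side (else $T$ would be $\partial$-parallel) and that the intersection curves carry the non-integral boundary slopes of $P$ — shows that either no such $P$ can exist, contradicting the hypothesis, or $E(L)$ carries an essential annulus, contradicting anannularity. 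Hence $L\subset\mathrm{int}\,V$, and then $W$ cannot be a solid torus (that would again compress $T$), so $W$ is the exterior of a nontrivial knot $k$ and $V$ is a knotted solid torus; I may also take $T$ innermost, so that $V$ contains no further essential torus of $E(L)$.

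The key step is to confine $P$ to $V$ as well. I would isotope $P$ to minimise $|P\cap T|$. As $P$ and $T$ are incompressible, $P\cap T$ is a union of loops essential in both surfaces, and every essential loop of the $3$-holed sphere $P$ is parallel in $P$ to one of $l_1,l_2,l_3$; hence cutting $P$ along $T$ yields $\partial$-parallel annuli in $P$ together with a single central $3$-holed sphere, and each annular piece lying in $W$ runs between two parallel copies of an essential curve on $T=\partial W$. Such an annulus is either essential in the knot exterior $W$ — which would furnish an essential annulus in $E(L)$, impossible — or $\partial$-parallel in $W$, contradicting minimality of $|P\cap T|$. Therefore $P\cap T=\emptyset$, and since $\partial P$ lies on the tori $\partial N(C_i)\subset\mathrm{int}\,V$, we conclude $P\subset\mathrm{int}\,V$.

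Now re-embed $V$ as the standard unknotted solid torus $\widehat V\subset S^3$ by a homeomorphism $\phi$, put $L'=\phi(L)$ and $P'=\phi(P)$, and let $l$ be the core of $S^3\setminus\widehat V$, a trivial loop disjoint from $L'\cup P'$. Then $E(L)$ is recovered from $E(L')$ by re-embedding $E(l)=\widehat V$ as the knotted solid torus $V$; that is, $(L,P)$ is a satellite of $(L',P')$ in the sense of the theorem. One checks that $P'$ is again an essential $3$-punctured sphere with non-meridional, non-integral boundary slopes: these data are all detected inside $V\setminus N(L)$, which $\phi$ carries homeomorphically onto $\widehat V\setminus N(L')$, using incompressibility of $T$ to transport incompressibility and non-$\partial$-parallelism of $P$ between $E(L)$ and this piece — and the denominator of the slope of $l_i$, being the geometric intersection number of $l_i$ with the meridian of $C_i$, is intrinsic to $\partial N(C_i)$. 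If $E(L')$ contains an essential annulus, then by Theorem~\ref{characterization} the pair $(L',P')$ is as in Case~1 or Case~2 there, and pulling back through $\phi$ places $(L,P)$ in Case~2 of the present theorem. If $E(L')$ is anannular, it is either atoroidal — in which case, degenerate cases aside, $E(L')$ is not Seifert fibred (a Seifert fibred $3$-component link exterior admitting a $3$-punctured sphere would contain a vertical essential annulus), so $L'$ is hyperbolic and we are in Case~3 — or $E(L')$ is again toroidal, and we repeat the construction; the JSJ complexity strictly decreases, so the process terminates, and composing the re-embeddings exhibits $(L,P)$ as a satellite of a possibly hyperbolic link $L'$ with a $3$-punctured sphere $P'$, i.e.\ Case~3.

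The hard part will be the confinement arguments of the first two paragraphs: controlling exactly how the connected $3$-holed sphere $P$, and each component of $L$, can meet an essential torus — curves parallel to the three boundary slopes, in a priori unbounded multiplicities — and ruling out every configuration in which a nontrivial piece survives on the knot-exterior side $W$. This is the point at which the non-integral slope hypothesis is genuinely used: without it, knot exteriors carry many essential planar surfaces with integral, in particular meridional, boundary, and these really occur, so the reduction to a single companion solid torus would break down. A secondary difficulty is the bookkeeping needed to make the trichotomy exhaustive — checking that essentiality and the slope data of $P$ persist under each re-embedding, and that the iteration bottoms out either at a link covered by Theorem~\ref{characterization} (Case~2) or at a hyperbolic one (Case~3).
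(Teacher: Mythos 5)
Your overall strategy matches the paper's (split on whether $P$ can be made disjoint from $T$; if so, the side of $T$ containing $L\cup P$ is a solid torus, re-embed it unknotted and invoke Theorem~\ref{characterization} or hyperbolicity), but the step where you force $P\cap T=\emptyset$ has a genuine gap. Cutting the $3$-punctured sphere $P$ along the curves of $P\cap T$ yields boundary-parallel annuli \emph{and one central pair of pants}, and your argument only disposes of annular pieces lying in $W$ with both boundary circles on $T$. It says nothing about the configuration in which the central pair of pants is the piece lying in $W$: for instance if $P\cap T$ consists of exactly three curves, one parallel to each $l_i$, then the three annular pieces all lie in $V$, the pair of pants lies in $W$ with its three boundary curves on $T$, and nothing in your argument contradicts minimality of $|P\cap T|$ --- so the conclusion ``therefore $P\cap T=\emptyset$'' does not follow. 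A secondary problem in the same paragraph: an annulus that is essential in the knot exterior $W$ has its boundary on $T$, which lies in the \emph{interior} of $E(L)$, so it is not an essential annulus of $E(L)$ in the sense of Theorem~\ref{characterization} (boundary on $\partial N(L)$); turning it into one requires an extra argument you do not supply. The paper avoids all of this with a different move: whenever $P\cap T\neq\emptyset$, take an \emph{outermost} annulus $A$ of $P$ cut along $T$ --- one adjacent to some $l_i$, hence with one boundary on $\partial N(C_i)$ of non-integral slope and one on $T$ --- and annulus-compress $T$ along $A$. The result is an annulus properly embedded in $E(L)$ with two boundary curves of the non-integral slope $l_i$; it cannot be compressible or boundary-parallel without making $T$ inessential or the slope integral, so $E(L)$ is annular and one is in Case~1 directly, with no need to assume anannularity or to analyze the pieces of $P$ in $W$ at all.

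Two smaller points. Your preliminary ``confinement of $L$ to $V$'' is hand-waved (``a careful minimal-position analysis \ldots shows''), and in fact it is unnecessary if you order the argument as the paper does: once $P\cap T=\emptyset$, the connected surface $P$ lies on one side of $T$, and since $\partial P$ meets all three tori $\partial N(C_i)$, the whole of $L$ lies on that same side, which must then be the solid torus (else $T$ would compress in $E(L)$). Your termination argument via decreasing JSJ complexity and your observation about Seifert fibred exteriors containing vertical annuli are fine, and are if anything slightly more careful than the paper, which instead chooses $T$ at the outset so that $(V_1-\mathrm{int}\,N(L))-P$ contains no essential torus.
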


\subsection{Embeddings of multibranched surfaces in the 3-sphere}

A {\em multibranched surface} is a 2-dimensional CW complex which is locally homeomorphic to a model as shown in Figure \ref{multibranch}.
We can regard a multibranched surface as a quotient space obtained from a compact 2-manifold ({\em sectors}) by gluing its boundary to a closed 1-manifold ({\em branches}) via covering maps.
We say that a multibranched surface is {\em regular} if for each branch, covering degrees of all covering maps are uniform. A systematic study of some types of multibranched surfaces is done in \cite{GGH2016} and \cite{GGH2018}, where the surfaces are called stratifolds. Here we are interested in the embedability of multibranched surfaces in the 3-sphere.

\begin{figure}[htbp]
	\begin{center}
	\includegraphics[trim=0mm 0mm 0mm 0mm, width=.4\linewidth]{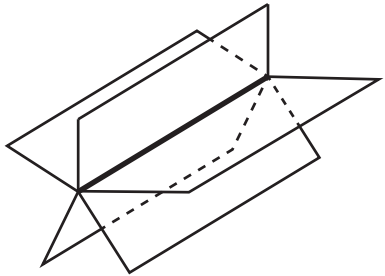}
	\end{center}
	\caption{A local model of multibranched surfaces}
	\label{multibranch}
\end{figure}

The Menger--N\"{o}beling theorem ({\cite[Theorem 1.11.4.]{E1978}}) shows that any finite 2-dimensional CW complex can be embedded in the 5-dimensional Euclidian space $\Bbb{R}^5$.
This is a best possible result since for example, the union of all 2-faces of a 6-simplex cannot be embedded in $\Bbb{R}^4$ ({\cite[1.11.F]{E1978}}).
In contrast to this result, it was shown that any multibranched surface can be embedded in $\Bbb{R}^4$ (\cite{G1987}, {\cite[Proposition 2.3]{MO2017}}).
Furthermore, a multibranched surface is embeddable in some closed orientable 3-dimensional manifold if and only if the multibranched surface is regular ({\cite[Proposition 2.7]{MO2017}}).
We remark that any 3-manifold can be embedded in $\Bbb{R}^5$ (\cite{W1956}).
Thus we obtain the next diagram on the embedability of multibranched surfaces (Figure \ref{embedding}).

\begin{figure}[htbp]
	\begin{center}
	\includegraphics[trim=0mm 0mm 0mm 0mm, width=.8\linewidth]{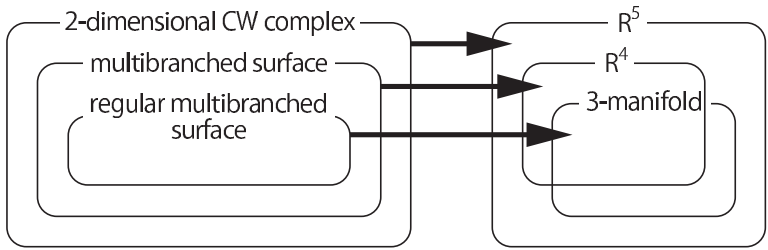}
	\end{center}
	\caption{The embeddability of multibranched surfaces}
	\label{embedding}
\end{figure}

It is convenient to represent a multibranched surface with only orientable sectors as a weighted bipartite graph.
We regard the set of sectors as the set of weighted vertices, where each vertex has a weight by the genus of the sector.
Another vertex set represents the set of branches.
If a sector is glued with a branch by a covering map, then the corresponding vertex is joined to the corresponding vertex by an edge with the weight by the covering degree.
For example, a weighted bipartite graph as shown in Figure \ref{representation} represents a multibranched surface with one sector which is a compact orientable surface of genus $g$ with $n$ boundary components and $n$ branches, and the covering degrees are $p_1,\ p_2,\ \ldots,\ p_n$.

Theorem \ref{characterization} gives a partial solution of \cite[Problem, p.631]{MO2017}, that is, for a multibranched surface $X=X_g(p_1,p_2,\ldots,p_n)$ which has a graph representation as shown in Figure \ref{representation}, the embeddability into the 3-sphere is determined by the torsion-freeness of the first homology group.
Since $H_1(X)=\left( \mathbb{Z}/p \mathbb{Z} \right) \oplus \mathbb{Z}^{2g+n-1}$ as shown in \cite[Example 4.3]{MO2017}, where $p={\rm gcd}\{ p_1, \cdots, p_n\}$, if $p>1$, then $X$ cannot be embedded into the 3-sphere.

\begin{figure}[htbp]
	\begin{center}
	\includegraphics[trim=0mm 0mm 0mm 0mm, width=.35\linewidth]{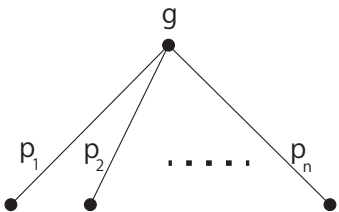}
	\end{center}
	\caption{A graph representation of a multibranched surface $X_g(p_1,p_2,\ldots,p_n)$}
	\label{representation}
\end{figure}

Conversely, if $p=1$, then can $X$ be embedded into the 3-sphere?
As it was announced in the footnote in \cite[Problem, p.631]{MO2017}, we show the following.

\begin{theorem}\label{solution}
If $p=1$, then $X_g(p_1,p_2,p_3)$ can be embedded into the 3-sphere for a sufficiently large $g$.
\end{theorem}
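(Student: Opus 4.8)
The plan is to realise the embedding by cutting $X_g(p_1,p_2,p_3)$ into its three ``collar'' annuli and the remaining genus-$g$ surface, placing each collar annulus inside a solid torus where it wraps with the prescribed degree, and placing the genus-$g$ surface in the complement of a carefully chosen auxiliary link. First I would establish the local model: let $V=S^1\times D^2$ have core $b$, and let $\gamma\subset\partial V$ be a simple closed curve with $[\gamma]=p\,[\mathrm{longitude}]+q\,[\mathrm{meridian}]$ and $\gcd(p,q)=1$. Then there is an annulus $A\subset V$ with $\partial A=\gamma\cup b$, embedded away from $b$, such that the map $A\to b$ obtained by collapsing the $b$-side is a $p$-fold covering: writing $V=\{(\theta,z):\theta\in\mathbb{R}/2\pi\Z,\ z\in D^2\}$ one may take $A$ to be the image of $(\mathbb{R}/2\pi\Z)\times[0,1]$ under $(s,r)\mapsto(ps,\,re^{iqs})$, injectivity for $r>0$ being exactly the statement $\gcd(p,q)=1$. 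This is precisely a neighbourhood of a branch with covering degree $p$ in a multibranched surface, and in particular it forces the slope to be represented by a \emph{connected} curve, which is why we will need $\gcd(p_i,q_i)=1$.

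Consequently, to embed $X_g(p_1,p_2,p_3)$ in $S^3$ for large $g$ it suffices to find a $3$-component link $L_0=b_1\cup b_2\cup b_3\subset S^3$, set $V_i=N(b_i)$ and $M=S^3\setminus\mathrm{int}(V_1\cup V_2\cup V_3)$, and pick a slope $\gamma_i=p_i\lambda_i+q_i\mu_i$ on $\partial V_i$ with $\gcd(p_i,q_i)=1$ ($\mu_i,\lambda_i$ the meridian and Seifert longitude) such that $[\gamma_1]+[\gamma_2]+[\gamma_3]=0$ in $H_1(M;\Z)$. Indeed, by the standard fact that a null-homologous union of curves on $\partial M$ bounds a properly embedded orientable surface — which one then makes connected by tubing and of arbitrarily large genus by adding trivial handles — there is, for every sufficiently large $g$, an embedded connected orientable surface $S\subset M$ of genus $g$ with $\partial S=\gamma_1\cup\gamma_2\cup\gamma_3$; attaching the three annuli $A_i\subset V_i$ from the local model then produces an embedded copy of $X_g(p_1,p_2,p_3)$ (the $A_i$ are its collar annuli, the $b_i$ its branches, and tubing does not change the genus of the sector).

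It then remains to choose $L_0$. Now $H_1(M;\Z)\cong\Z^3$ is freely generated by $[\mu_1],[\mu_2],[\mu_3]$, with $[\lambda_i]=\sum_{j\ne i}\mathrm{lk}(b_i,b_j)[\mu_j]$, so a short computation identifies the coefficient of $[\mu_k]$ in $[\gamma_1]+[\gamma_2]+[\gamma_3]$ with $q_k+\sum_{i\ne k}p_i\,\mathrm{lk}(b_i,b_k)$. Hence, writing $\ell_{ij}=\mathrm{lk}(b_i,b_j)$, it is enough to find integers $\ell_{ij}=\ell_{ji}$ with $\gcd\bigl(p_k,\ \sum_{i\ne k}p_i\ell_{ik}\bigr)=1$ for $k=1,2,3$, then take $q_k=-\sum_{i\ne k}p_i\ell_{ik}$ (the gcd condition automatically makes each $q_k$ meaningful, since $\gcd(p_k,q_k)=1$), and realise $L_0$ as a link with these linking numbers — every symmetric integer matrix with zero diagonal is the linking matrix of some $3$-component link. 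The existence of such $\ell_{ij}$ is where the hypothesis $\gcd(p_1,p_2,p_3)=1$ enters: analysing one prime $r$ at a time, with $I_r=\{k:r\mid p_k\}$, one has $|I_r|\le 2$, and for $k\in I_r$ the condition ``$r\nmid\sum_{i\ne k}p_i\ell_{ik}$'' involves only the $\ell_{ik}$ with $i\notin I_r$, of which there is at least one; one checks the at most two such conditions are jointly solvable modulo $r$ for every prime $r\mid p_1p_2p_3$, and the Chinese Remainder Theorem assembles global integers $\ell_{ij}$.

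I expect the main obstacle to be this last step — making the prime-by-prime bookkeeping airtight for every admissible $(p_1,p_2,p_3)$, in particular the cases $|I_r|=2$ and the small primes $r=2,3$, where the cyclic dependence of the three conditions on the three parameters $\ell_{12},\ell_{13},\ell_{23}$ has to be handled explicitly. A secondary point requiring care is the surgery argument in the second paragraph that turns an arbitrary properly embedded surface in the correct homology class into a connected one whose boundary is \emph{exactly} $\gamma_1\cup\gamma_2\cup\gamma_3$; since each $[\gamma_i]$ is primitive this is routine (compress off inessential and redundant boundary circles, then tube), but it should be spelled out, together with the observation that the resulting object is genuinely homeomorphic to $X_g(p_1,p_2,p_3)$.
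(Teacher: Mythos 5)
Your proposal is correct and follows essentially the same route as the paper: reduce to finding a $3$-component link and coprime slopes $p_i\lambda_i+q_i\mu_i$ whose sum is null-homologous in the link exterior (equivalently $q_k=-\sum_{i\neq k}p_i\,\mathrm{lk}(b_i,b_k)$), verify an arithmetic lemma guaranteeing $\gcd(p_k,q_k)=1$ using $\gcd(p_1,p_2,p_3)=1$, and cap a bounding surface off with the cabling-annulus local models at each branch. The only real divergence is in the arithmetic sub-lemma, where you argue prime-by-prime with the Chinese Remainder Theorem (which does work, since for each prime $r$ the set $I_r$ has at most two elements and the resulting non-vanishing conditions involve distinct variables $\ell_{ik}$ with $i\notin I_r$) whereas the paper exhibits explicit values of the linking numbers; your version is arguably cleaner, and your homological derivation of the bounding surface replaces the paper's tubing of a Seifert surface of the cable link to the same effect.
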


Theorem \ref{solution} can be proved by constructing a suitable Seifert surface cobounded by a 3-component cable link.

Next, we consider whether the genus $g$ can be always taken to be $0$.

Let $L=C_1\cup C_2 \cup C_3$, $A$, $A'$, $A_2$, $A_3$, $A_2'$, $A_3'$, $P$, $P_2$, $P_3$ as in Case 1 of Theorem \ref{characterization}. 
Assume that $\partial A$ has slope $p\lambda_2 +q\mu_2$ on $\partial N(C_2)$ and slope $-q\lambda_3 -p\mu_3$ on $\partial N(C_3)$.
Assume that $\partial A'$ has slope $r\lambda_2 +s\mu_2$ on $\partial N(C_2)$ and slope $-s\lambda_3 -r\mu_3$ on $\partial N(C_3)$.
Then we must have $ps-qr=\pm 1$. Now assume that $P_2$ is obtained by $n_2$ twists of $A_2$ around $A_2'$, and that 
$P_3$ is obtained by $n_3$ twists of $A_3$ around $A_3'$. Remember that $P=P_2\cup P_3$. It follows that $\partial P$ has slope 
$(p+n_2r)\lambda_2 + (q+n_2s)\mu_2$ on $\partial N(C_2)$, slope $(-q-n_3s)\lambda_3 + (-p-n_3r)\mu_3$ on $\partial N(C_3)$,
and slope $(n_3-n_2)\lambda_1 - \mu_1$ on $\partial N(C_1)$, where $\lambda_1$ is given by the embedding of $C_1$ on the torus $T$.

So, if $X_0(p_1,p_2,p_3)$ is embedded in the 3-sphere as in Case 1 of Theorem \ref{characterization}, then there exists $p,q,r,s,n_2,n_3$
such that $p_1=n_3-n_2$, $p_2=p+n_2r$ and $p_3=-q-n_3s$, where $ps-qr=\pm 1$. From this follows that $p_1,p_2,p_3$ must satisfy
$sp_2+rp_3+srp_1=\pm 1$. It can be shown that if $X_0(p_1,p_2,p_3)$ is embedded in the 3-sphere as in Case 2 of Theorem \ref{characterization},
then there is an integer $t$ such that $p_3=1+tp_1$, and $p_2$ is arbitrary, but subject to $1={\rm gcd}\{ p_1, p_2, p_3\}$.

It is not too difficult to show that the triple $\{5,7,18\}$ does not satisfy the above conditions for $\{ p_1, p_2, p_3\}$. So, it could be that
$X_0(5,7,18)$ cannot be embedded in the 3-sphere. So, we ask:
 
%By the construction of 3-component links and 3-punctured spheres in Theorem \ref{characterization}, the following conjecture seems to be reasonable.

\begin{question}\label{genus0}
If $p=1$, can $X_0(p_1,p_2,p_3)$ can be embedded into the 3-sphere? If not, which is the minimal $g$ for which $X_g(p_1,p_2,p_3)$ can be
embedded in the 3-sphere?
\end{question}

%Moreover, we expect the following.

%\begin{conjecture}[Genus reducing conjecture]\label{genus}
%Let $X$ be a multibranched surface which can be embedded in the 3-sphere.
%If $X$ has a sector of genus greater than 0, then there exists an embedding of $X$ in the 3-sphere such that the sector is compressible.
%\end{conjecture}

%It can be confirmed Conjecture \ref{genus} for the case that $X$ has at most 2 branches.
%Conjecture \ref{genus0} implies Conjecture \ref{genus} for the case that $X$ has 3 branches.

\section{Proof of Theorems \ref{characterization} and \ref{characterizationtorus}}

%First, we prove Proposition \ref{2-component}.
%Let $L=C_1\cup C_2$ be a 2-component link in the 3-sphere and suppose that there exists a 3-punctured sphere $P$ in the exterior $E(L)$ with non-meridional, non-integral slope.
%Put $\partial P=l_1\cup l_2\cup l_3$.
%Then we may assume without loss of generality that $l_1\subset \partial N(C_1)$ and $l_2\cup l_3\subset \partial N(C_2)$.
%Since $l_2$ and $l_3$ are parallel in $\partial N(C_2)$, they cobound an annulus $A$.
%By adding the annulus $A$ with $P$, we obtain a once punctured torus $P'$ whose boundary is contained in $\partial N(C_1)$.
%But, $P'\cup N(C_1)$ has a torsion since $\partial P'=l_1$ has non-meridional, non-integral slope on $\partial N(C_1)$.
%This is a contradiction.

\subsection{Annular case} 

Now we proceed to give a proof of Theorem \ref{characterization}. Let $L=C_1\cup C_2\cup C_3$ be a 3-component non-hyperbolic link in the 3-sphere $S^3$,
and $P$ be a 3-punctured sphere in the exterior $E(L)$ of $L$ with $\partial P=l_1\cup l_2\cup l_3$ such that $l_i$ has a slope $q_i/p_i$, where $p_i>1$ $(i=1,2,3)$. Note first that $P$ is incompressible, for by doing a compression we would get a disk whose boundary have non-integral slope on some $C_I$. This also implies that the link $L$ is non-splitable.

Suppose that $E(L)$ contains an essential annulus $A$ with $\partial A=a_1\cup a_2$. %, where $a_i$ has non-meridional slope $(i=1,2)$.
There are seven cases.

\begin{description}
\item[Case A] Both $a_1$ and $a_2$ have integral slopes on $\partial N(C_1)$.
\item[Case B] Both $a_1$ and $a_2$ have meridional slope on $\partial N(C_1)$.
\item[Case C] $a_1$ has meridional slope on $\partial N(C_1)$ and $a_2$ has an integral slope on $\partial N(C_2)$, .
\item[Case D] $a_1$ has an integral slope on $\partial N(C_1)$, and $a_2$ has a non-integral slope on $\partial N(C_2)$.
\item[Case E] $a_1$ has an integral slope on $\partial N(C_1)$, and $a_2$ has an integral slope on $\partial N(C_2)$.
\item[Case F] Both $a_1$ and $a_2$ have non-integral slopes on $\partial N(C_1)$ and $\partial N(C_2)$ respectively.
\item[Case G] Both $a_1$ and $a_2$ have non-integral slopes on $\partial N(C_1)$.
\end{description}

\vspace{1em}

In Case A, it follows that either $A$ is a cabling annulus for $C_1$, that is, $A$ is an essential annulus in $E(C_1)$, and then $C_1$ is a non-trivial tours knot or cable knot, or well, $A$ is a trivial annulus in $E(C_1)$. Let $T$ be a torus consisting of the union of $A$ and an annulus $A'$ contained in $\partial N(C_1)$; we can consider that the torus $T$ contains $C_1$. Then $C_1$ is a $(p,q)$-cable knot on $T$, with the possibility that $p=1$.

Since $l_1$ has a non-integral slope and $a_i$ has an integral slope on $\partial N(C_1)$, we may assume that $P\cap A$ consists of arcs $\alpha_1,\ldots,\alpha_n$ such that $\alpha_i$ is an essential arc in both $P$ and $A$ $(i=1,\ldots,n)$ which connects two points in $l_1$.
Let $Q_2$ be a region of $P-\alpha_2$ cobounded by $l_2$ and $\alpha_n$ with a subarc in $l_1$, and $Q_3$ be a region of $P-\alpha_1$ cobounded by $l_3$ and $\alpha_1$ with a subarc in $l_1$.

\begin{figure}[htbp]
	\begin{center}
	\includegraphics[trim=0mm 0mm 0mm 0mm, width=.3\linewidth]{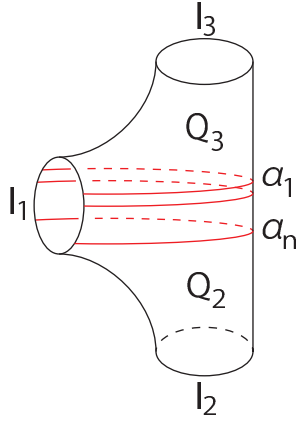}
	\end{center}
	\caption{Arcs $\alpha_1,\ldots,\alpha_n$ in $P$ and regions $Q_2,\ Q_3$}
	\label{P}
\end{figure}

First, assume that $n\ge 2$, and let $R_i$ be the region of $P-(\alpha_1\cup\cdots\cup\alpha_n)$ between $\alpha_i$ and $\alpha_{i+1}$ $(i=1,\ldots,i-1)$.

\begin{lemma}\label{cable}
A region $R_i$ implies that $C_1$ is a $(2,q)$-cable knot on $T$.
\end{lemma}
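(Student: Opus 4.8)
The plan is to turn the rectangle $R_i$ into a new essential annulus in $E(C_1)$ by completing it across $A$, and then to read the cabling degree of $C_1$ on $T$ off the boundary of that annulus.

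First I would fix the combinatorial picture. Assuming (as we may, after an isotopy minimizing $|P\cap A|$) that every $\alpha_j$ is essential in both $A$ and $P$, the fact that an annulus has a unique isotopy class of essential arc, and a pair of pants has a unique isotopy class of essential arc joining $l_1$ to itself, forces all the $\alpha_j$ to be mutually parallel in $A$ and in $P$. Hence the $\alpha_j$ cut $A$ into $n$ rectangles and cut $P$ into two annuli (containing $l_2$ and $l_3$) together with $n-1$ rectangles, one of which is $R_i$. Write $\partial R_i=\alpha_i\cup\beta_i'\cup\alpha_{i+1}\cup\beta_i$ with $\beta_i,\beta_i'$ subarcs of $l_1$, and let $R_i^A$ be the rectangle of $A$ lying between $\alpha_i$ and $\alpha_{i+1}$, so that $\partial R_i^A=\alpha_i\cup\gamma_i'\cup\alpha_{i+1}\cup\gamma_i$ with $\gamma_i$ an innermost subarc of $a_1$ and $\gamma_i'$ an innermost subarc of $a_2$.

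Next I would glue $R_i$ to $R_i^A$ along $\alpha_i\cup\alpha_{i+1}$. Since $R_i\cap A=\alpha_i\cup\alpha_{i+1}$ and $\gamma_i,\gamma_i'$ are innermost, the result $B_i:=R_i\cup R_i^A$ is a properly embedded annulus in $E(C_1)$ whose two boundary circles $c_1=\gamma_i\cup\beta_i$ and $c_2=\gamma_i'\cup\beta_i'$ lie on $\partial N(C_1)$. I would then argue that $B_i$ is essential in $E(C_1)$: a compression or boundary-compression of $B_i$ would, after standard cut-and-paste, either produce a reducing sphere in the irreducible manifold $S^3$ or permit an isotopy decreasing $|P\cap A|$, contradicting minimality. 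An essential annulus in a knot exterior with both boundary components on $\partial N(C_1)$ makes $C_1$ a nontrivial cable (or composite) knot; since we already have the torus $T$ from $A$, $B_i$ is then, up to isotopy, the cabling annulus for $C_1$, so $C_1$ sits on $T$ as a $(p,q)$-cable with $p\ge2$ and $\partial B_i$ carries the cabling slope on $\partial N(C_1)$.

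Finally I would pin $p$ down. The key point is that $c_1$ is assembled from a single innermost arc $\gamma_i$ of $a_1\setminus l_1$ and a single innermost arc $\beta_i$ of $l_1$, so by tracking how the $2n$ endpoints of the $\alpha_j$ are interleaved along $l_1$ on the one hand and distributed on the two parallel curves $a_1$, $a_2$ on the other — two cyclic orderings that differ by a fixed rotation dictated by the slope of $l_1$ relative to that of $a_1$ — one computes the slope of $c_1$ and, comparing it with the slopes of $a_1$ and of $l_1$, finds that the cabling degree must equal $2$; that is, $C_1$ is a $(2,q)$-cable on $T$. I expect this last step — showing the degree is exactly $2$, and not merely even — to be the real obstacle, since it rests on a delicate bookkeeping of the endpoint orderings and uses crucially that $\alpha_i$ and $\alpha_{i+1}$ are simultaneously consecutive in $A$ (they bound $R_i^A$) and in $P$ (they bound $R_i$); the preceding steps are routine innermost-disk arguments.
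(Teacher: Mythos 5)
There is a genuine gap, and it sits exactly where the content of the lemma lies. Your argument builds the annulus $B_i=R_i\cup R_i^A$ and concludes that $C_1$ is \emph{some} $(p,q)$-cable on $T$ with $p\ge 2$ --- but that $C_1$ is a cable on $T$ (possibly with $p=1$) is already part of the Case~A setup, so nothing is gained until $p$ is pinned to $2$. That final step you explicitly leave as ``delicate bookkeeping'' of endpoint orderings and flag as ``the real obstacle''; no such computation is carried out, so the statement proved is strictly weaker than the lemma. The difficulty is partly self-inflicted: an annulus whose entire boundary lies on $\partial N(C_1)$ records the cabling slope $pq$ rather than the winding number $p$, so extracting $p=2$ from $\partial B_i$ really does require the interleaving analysis you defer. (Two smaller soft spots: whether $\partial B_i$ is one circle or two depends on how $\beta_i,\beta_i'$ join the four corner points, and the claim that $B_i$ must be \emph{the} cabling annulus for the given torus $T$, rather than some other essential annulus in $E(C_1)$, is asserted without justification.)

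The paper's route avoids all of this: instead of doubling $R_i$ across $A$, it extends $R_i$ itself across $N(C_1)$ --- pushing the two subarcs of $l_1$ in $\partial R_i$ through the solid torus $N(C_1)$ onto the annulus $T\cap \partial N(C_1)$ --- to obtain a compressing \emph{disk} $D$ for $T$. Each of the two $\partial N(C_1)$-sides of the rectangle sweeps across $C_1$ exactly once, so $|\partial D\cap C_1|=2$; since $\partial D$ bounds a meridian disk of the solid torus that $T$ bounds on that side, and $C_1\subset T$ meets this meridian in two points, $C_1$ is a $(2,q)$-curve on $T$. Here the ``$2$'' is immediate from the fact that a rectangle has exactly two sides on $\partial N(C_1)$, with no slope bookkeeping needed. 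If you want to salvage your approach, the cleanest fix is to abandon $B_i$ and instead verify (i) that the pushed-across disk is embedded and is a genuine compressing disk (its boundary is essential in $T$ because $\alpha_i$ is essential in $A$), and (ii) that each pushed arc crosses $C_1$ exactly once, which follows from taking $R_i$ to be an innermost/adjacent pair region.
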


\begin{proof}
Since $\partial R_i$ consists of $\alpha_i$, $\alpha_{i+1}$, and two arcs in $\partial N(C_1)$, $R_i$ extends a compressing disk for $T$ which intersects $C_1$ in two points.
See Figure \ref{R} for the configulation.
Hence $C_1$ is $(2,q)$-cable knot on $T$.

\begin{figure}[htbp]
	\begin{center}
	\includegraphics[trim=0mm 0mm 0mm 0mm, width=.4\linewidth]{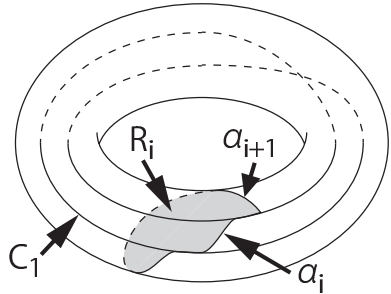}
	\end{center}
	\caption{A region $R_i$ implies that $C_1$ is a $(2,q)$-cable knot on $T$.}
	\label{R}
\end{figure}
\end{proof}

\begin{claim}
$n=1$.
\end{claim}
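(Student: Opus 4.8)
The plan is to argue by contradiction: suppose $n\ge 2$. Then the region $R_1$ exists, so Lemma \ref{cable} already tells us that $C_1$ is a $(2,q)$-cable knot lying on the torus $T$; thus $A=T\setminus N(C_1)$ is the cabling annulus, $T$ bounds a solid torus $V$ containing $C_1$, and $C_1$ runs twice in the longitudinal direction of $V$. The strategy is to exploit this rigid picture, together with the hypothesis that $l_1$ has non-integral (hence in particular non-meridional) slope, to produce an isotopy of $P$ that strictly decreases the number of arcs of $P\cap A$. This will contradict the fact that $P$ has already been put in a position where every component of $P\cap A$ is an essential arc of both surfaces, i.e. where $|P\cap A|$ is minimal.

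In more detail, the arcs $\alpha_1,\dots,\alpha_n$ are mutually parallel in $A$ and cut it into $n$ rectangles; let $B_1\subset A$ be the rectangle bounded by $\alpha_1$, $\alpha_2$ and subarcs of $a_1,a_2$. Since $R_1$ and $B_1$ are disks meeting precisely along $\alpha_1\cup\alpha_2$, the union $E:=R_1\cup B_1$ is an annulus properly embedded in $E(C_1)$, and hence in $E(L)$, whose two boundary circles lie on $\partial N(C_1)$ and are parallel there, each being the union of a subarc of some $a_i$ with a subarc of $l_1$. I would then run through the possibilities for $E$ inside $E(L)$. If $E$ is compressible, a compression of $E$ produces a compression of $P$, contradicting the incompressibility of $P$ noted at the outset. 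If $E$ is incompressible, then because its boundary slope on $\partial N(C_1)$ is non-meridional and $C_1$ is a cable knot, $E$ must be $\partial$-parallel in $E(L)$ or isotopic to the cabling annulus $A$; in either case $E$ cobounds a product region with an annulus lying in $\partial N(C_1)\cup A$, and pushing the subdisk $R_1\subset P$ across this product region is an isotopy of $P$ that removes $\alpha_1$ and $\alpha_2$ from $P\cap A$ without creating new intersection arcs (the region meets $A$ only in $B_1$). This is the desired contradiction.

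The main obstacle is to guarantee that the product region just described actually lies in $E(L)$ — equivalently, that neither $C_2$ nor $C_3$ is swallowed by it — so that the isotopy takes place in $E(L)$ and keeps $P$ disjoint from $C_2\cup C_3$. This is the step where the non-integral slope hypotheses must genuinely be used, together with the incompressibility of $P$ and the non-splittability of $L$: a core of $N(C_2)$ or $N(C_3)$ trapped in the region would, through the portion of $P$ attached to it, force either a compression of $P$ or a splitting sphere for $L$, both of which have been excluded. Dispatching $n\ge 2$ in this way, together with the separate and easier verification that $n\neq 0$, yields $n=1$.
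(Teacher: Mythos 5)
Your strategy has a gap that I don't think can be repaired: you aim to contradict the minimality of $|P\cap A|$ by pushing $R_1$ across a product region, but $n$ is not something an isotopy can ever reduce once $\partial P$ and $\partial A$ are in minimal position on $\partial N(C_1)$. Every arc $\alpha_i$ has both endpoints on $l_1\cap(a_1\cup a_2)$, so $n$ equals the geometric intersection number $\Delta(l_1,a_j)=|p_1q_a-q_1|$ of the two slopes on the torus $\partial N(C_1)$, and this is $\ge 2$ for plenty of admissible pairs (e.g.\ $p_1=2$, $q_1=3$, $q_a=0$). Hence ``$n\ge 2$'' is in general forced by the boundary data, not an artifact of bad position, and it must be excluded by showing that the \emph{link} cannot exist in that configuration. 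That is exactly what the paper's proof does, using the content of Lemma \ref{cable} that your argument never invokes: for $n>2$, consecutive regions $R_i$, $R_{i+1}$ lie on opposite sides of $T$ and give compressing disks of $T$ on both sides, so $C_1$ would be a $(2,2)$-curve on a Heegaard torus, i.e.\ a link and not a knot; for $n=2$, the single compressing disk makes one side a solid torus and the regions $Q_2$, $Q_3$ then force $C_2$ and $C_3$ to both be cores of the other side, a contradiction. Your isotopy argument cannot see either of these global obstructions.

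Two further steps are also unjustified. First, the trichotomy for $E=R_1\cup B_1$ (compressible, $\partial$-parallel, or isotopic to the cabling annulus) is asserted, not proved; in a $3$-component link exterior an incompressible annulus with boundary on $\partial N(C_1)$ need not be of either of the latter two types, and indeed the minimal-position remark above shows $E$ is typically \emph{not} $\partial$-parallel in the way you need. Second, in the compressible branch, a compressing disk for the annulus $E$ has boundary isotopic to the core of $E$, which necessarily runs through $B_1\subset A$; it is not a curve on $P$, so it does not produce a compression of $P$, and the contradiction with incompressibility of $P$ does not follow. The acknowledged ``main obstacle'' (keeping $C_2$ and $C_3$ out of the product region) is therefore not the only missing piece; the engine of the proof has to be the compressing disks for $T$ supplied by the regions $R_i$, as in the paper.
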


\begin{proof}
If $n>2$, then there are two compressing disks for $T$ in both sides which come from $R_i$ and $R_{i+1}$.
By Lemma \ref{cable}, this shows that $C_1$ is a $(2,2)$-cable link on $T$, a contradiction.

Next, we consider the case $n=2$.
By Lemma \ref{cable}, the region $R_1$ extends a compressing disk for $T$.
Put $S^3=V_2\cup_T V_3$, where $R_1 \subset V_2$, and then $V_2$ must be a solid torus.
Then $C_2$ and $C_3$ are contained in $V_3$.
The region $Q_2$ is an annulus cobounded by $l_2$ and $\alpha_2$ with a subarc in $l_1$, and $l_2$ wraps around $C_2$. Note that the union of $\alpha_2$ and the subarc of $l_1$ wraps around $T$ at least twice longitunally, for otherwise $l_1$ would be a longitudinal slope on $C_1$. This implies that $V_3$ is a solid torus and that $C_2$ is its core. Similarly,  $l_3$ is a core of the solid torus $V_3$. This results in a contradiction.

Therefore, we have $n=1$.
\end{proof}

In the following, we assume that $n=1$. Put $S^3=V_2\cup_T V_3$, and we assume that $V_2$ is a solid torus. Assume that $Q_2 \subset V_2$ and that
$Q_3 \subset V_3$. 

Let $\alpha_2'$ be the loop $\partial Q_2-l_2$, that is, $\alpha_2'$ is the union of $\alpha_1$ and an arc of $l_1$. The arc $\alpha_2'$ is parallel to $l_2$, and then $C_2$ is a core of $V_2$. The arc $\alpha_2'$ wraps $l_2$ times longitudinally around $T$. Let $\gamma_2$ be an arc in $\partial N(C_1)\cap V_2$ which connects two points of $\partial \alpha_1$, and such that $\alpha_1 \cup \gamma_2$ wraps longitudinally around $T$ a minimal number of times among all such arcs in $\partial N(C_1)\cap V_2$. 
By an isotopy of $P$, we assume that $\alpha_2'\cap N(C_1)$ intersects $\gamma_2$ minimally.
Then it can be observed that $\alpha_2'$ is obtained from $\alpha_1\cup \gamma_2$ by Dehn twists along the core of $\partial N(C_1)\cap V_2$. Let $A_2$ be an annulus in 
$V_2-int N(C_2)$, where $\partial A_2 =(\alpha_1\cup \gamma_2) \cup \gamma_2'$, where $\gamma_2'$ is a curve on $\partial N(C_2)$, and let $A_2'$ be another annulus in
$V_2-int N(C_2)$, where one boundary component is the core of $\partial N(C_1)\cap V_2$, and the other is a curve on $\partial N(C_2)$. Note that $Q_2$ and $A_2'$ intersect in one arc, and that $Q_2$ is obtained by twisting $A_2$ along $A_2'$.

If $V_3$ is also a solid torus, then $C_3$ is a core of $V_3$, and by a similar construction we have annuli $A_3$, $A_3'$, and an arc $\gamma_3 \subset \partial N(C_1)\cap V_3$,
so that $\gamma_2 \cup \gamma_3$ is a meridian of $N(C_1)$, then the union of $A_2$, $A_3$ and a meridian of $N(C_1)$ is a spanning annulus between $C_2$ and $C_3$. It follows also that $Q_3$ is obtained by twisting $A_3$ along $A_3'$. Then we have the conclusion of Case 1 in Theorem \ref{characterization}.

Suppose now that $V_3$ is not a solid torus, that is, $V_2$ is knotted. Let $\alpha_3'$ be the loop $\partial Q_3-l_2$, that is, $\alpha_3'$ is the union of $\alpha_1$ and an arc of $l_1$. Note that $\alpha_3'$ must be a longitude of $V_2$, and that $C_2$ is a cable around $C_3$. It follows that we have the conclusion of Case 2 in Theorem \ref{characterization}.

% and the other is a curve on . This implies that $Q_2$ is obtained from an annulus, say $A_2$, by twisting along an annulus, say $A_2'$, where $A_2$ is a product of $\alpha_2\cup \gamma_2$ and a closed interval and $A_2'$ is a product of the core of $\partial N(C_1)\cap V_2$.

%Let $\alpha_i'$ be a loop of $\partial Q_i-l_i$ .
%The region $Q_i$ implies that $\alpha_i'$ is a cable knot of $C_i$ and $C_i$ is a core of a solid torus $V_i$ without loss of generality.
%Since $V_i-int N(C_i)$ is homeomorphic to a product space of a torus and a closed interval, $Q_i$ is isotopic to a vertical annulus in it.

%Let $\gamma_i$ be an arc in $\partial N(C_1)\cap V_i$ which connects two points of $\partial \alpha_1$.
%By an isotopy of $P$, we assume that $\alpha_i'\cap (\partial N(C_1)\cap V_i)$ intersects $\gamma_i$ minimally.
%Then it can be observed that $\alpha_i'$ is obtained from a loop $\alpha_i\cup \gamma_i$ by Dehn twists along the core of $\partial N(C_1)\cap V_i$.
%This implies that $Q_i$ is obtained from an annulus, say $A_i$, by twisting along an annulus, say $A_i'$, where $A_i$ is a product of $\alpha_i\cup \gamma_i$ and a closed interval 5and $A_i'$ is a product of the core of $\partial N(C_1)\cap V_i$.

%If both of $Q_2$ and $Q_3$ are obtained by twistings, then we have the conclusion of Case 1.%
%Otherwise, since $p_1>1$, without loss of generality, $Q_3$ is obtained by twistings and we have the conclusion of Case 2.

\vspace{1em}
Suppose now we have Case B. In this case the annulus $A$ extends to a decomposing sphere for $L$. As $l_1$ has a non-integral slope and $a_i$ has meridional slope on $\partial N(C_1)$, we may assume that $P\cap A$ consists of arcs $\alpha_1,\ldots,\alpha_n$, such that $\alpha_i$ is an essential arc in both $P$ and $A$ $(i=1,\ldots,n)$ which connects two points in $l_1$. Note that $l_1$ intersects each $a_i$ in at least two points, which implies that $n\geq 2$. As in the previous case, there is a region $R$ between the arcs $\alpha_1$ and $\alpha_2$. The curve $\partial R$ defines a non-integral slope for the torus formed by the union of $A$ and a meridional annulus bounded by $a_1\cup a_2$, which is not possible.

\vspace{1em}
In case C, it follows that there is also an annulus $A$ as in Case B.

\vspace{1em}
In Case D, it follows that $C_1$ is a cable of $C_2$ and there exists a cabling annulus for $C_1$ disjoint from $A$.
This arrives at Case A.

\vspace{1em}
In Case E, it follows that there exists an essential annulus $A'$ in $E(L)$ with $\partial A'=a_1'\cup a_2'$ such that $a_i'$ has an integral slope on $\partial N(C_1)$ $(i=1,2)$.
This also arrives at Case A.

\vspace{1em}
In Case F, $C_1\cup C_2$ is a Hopf link.
Put $V=E(C_1\cup C_2)-int N(A)$, which is a solid torus with a non-trivial torus knot core. Note that $C_3 \subset V$.
Suppose first that $P$ and $A$ have non-empty intersection. Then $P\cap A$ consists of $n_{12}$ essential arcs connecting $l_1$ and 
$l_2$, and $n_3$ essential loops parallel to $l_3$.

\begin{claim}
$n_{12}=1$ and $n_3=0$.
\end{claim}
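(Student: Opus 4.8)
The plan is to fix $A$ and $P$ in minimal position. Since $E(L)$ is irreducible, $L$ is non-splittable, and $P$ is incompressible, the usual innermost-disk/outermost-arc cleaning lets me arrange that no component of $P\cap A$ is trivial in $P$ or in $A$; thus each loop of $P\cap A$ is core-parallel in $A$ (so it separates $a_1$ from $a_2$) and, by the stated configuration, parallel to $l_3$ in $P$, while each arc is a spanning arc of $A$ and an essential $l_1$–$l_2$ arc of $P$. Because a core-parallel loop of $A$ separates $a_1$ from $a_2$, no arc can be disjoint from such a loop, so either $n_{12}=0$ or $n_3=0$. I will also use the structure already established at the opening of Case F: $V=N(K)$ for a nontrivial torus knot $K$, so $S^3-\mathrm{int}\,V=E(K)$ has incompressible boundary, $C_1\cup C_2\subset E(K)$, and $C_3\subset\mathrm{int}\,N(K)$. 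Setting $X=E(L)-\mathrm{int}\,N(A)=N(K)-\mathrm{int}\,N(C_3)$, one has $\partial X=\partial N(K)\sqcup\partial N(C_3)$, and minimal position makes $\bar P:=P\cap X$ incompressible and $\partial$-incompressible in $X$.

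First I would rule out $n_3\ge 1$. Assuming $n_3\ge 1$ (hence $n_{12}=0$), choose the loop $c$ of $P\cap A$ closest to $a_1$ in $A$; it cobounds with $a_1$ an annulus $A_0\subset A$ with $\mathrm{int}\,A_0\cap P=\emptyset$, and in $P$ it cobounds with $l_3$ an annulus $P_1$. One checks $A_0\cap P_1=c$, so $\hat A:=A_0\cup_c P_1$ is an embedded annulus in $E(L)$ with $\partial\hat A=a_1\cup l_3$. A compression of $\hat A$ would, after cleaning its interior intersections with $P$ against irreducibility, yield a compression of $P$ along the essential curve $c$, which is impossible; and $\hat A$ is not $\partial$-parallel because its two boundary curves lie on distinct components of $\partial E(L)$. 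Thus $\hat A$ is an essential annulus realizing non-integral slopes on $\partial N(C_1)$ (we are in Case F) and on $\partial N(C_3)$, so applying to $\hat A$ and the pair $\{C_1,C_3\}$ the argument that opens Case F, the link $C_1\cup C_3$ is a Hopf link; in particular $C_3$ is unknotted. But then $E(C_3)$ is a solid torus containing the torus $\partial N(K)$, which is incompressible toward its $E(K)$-side; an incompressible torus in a solid torus is $\partial$-parallel, and $\partial N(K)$ being $\partial$-parallel in $E(C_3)$ would force $N(K)-\mathrm{int}\,N(C_3)\cong T^2\times I$, i.e.\ $C_3$ isotopic to the nontrivial knot $K$ — a contradiction. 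Hence $\partial N(K)$ is compressible in $E(C_3)$, the compressing disk must lie in $N(K)-\mathrm{int}\,N(C_3)$ rather than in $E(K)$, so it is a meridian disk of the solid torus $N(K)$ disjoint from $C_3$; cutting $N(K)$ along it places $C_3$ in a ball disjoint from $C_1\cup C_2$, contradicting non-splittability. Therefore $n_3=0$.

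Next I would show $n_{12}=1$. Now $P\cap A$ consists of $n_{12}\ge 1$ spanning arcs of $A$, and these are automatically parallel in $P$ as well, since a pair of pants has a unique isotopy class of essential arc joining $l_1$ and $l_2$. If $n_{12}\ge 2$, then $\bar P$ has a disk component $R$ lying between two consecutive arcs, and $\partial R$ lies entirely on the torus $\partial N(K)\subset\partial X$, being made of subarcs of $l_1$, $l_2$ and of the cut-open copies of $A$. By $\partial$-incompressibility of $\bar P$ together with irreducibility of $S^3$ and non-splittability of $L$, $R$ cannot be a trivial disk in the solid torus $N(K)$: a trivial $R$ would cut off a ball, which either contains $C_3$ (so $L$ would be split) or lets one reduce $|P\cap A|$, against minimality. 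Hence $\partial R$ is essential on $\partial N(K)$, so $R$ is a meridian disk of $N(K)$ missing $C_3$, and cutting $N(K)$ along $R$ again exhibits $C_3$ in a ball disjoint from $C_1\cup C_2$, a contradiction. So $n_{12}\le 1$, and since $P\cap A\neq\emptyset$ and $n_3=0$, we conclude $n_{12}=1$ and $n_3=0$.

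The hardest parts are not in this skeleton but in two supporting points. The first is confirming that the reasoning opening Case F really does transfer verbatim to $\hat A$ and the pair $\{C_1,C_3\}$ — that is, that an essential annulus realizing non-integral slopes on the two link components it joins forces those components to form a Hopf link — which I am inheriting from the Case F discussion. The second is the routine-but-delicate verification that minimal position of $P$ with respect to $A$ genuinely makes $\bar P$ $\partial$-incompressible in $X$, so that disk components of $\bar P$ have essential boundary unless the intersection can be reduced; carrying out that reduction-of-intersection bookkeeping carefully is where I expect most of the work to go, while the rest is a short interplay between the solid-torus structure of $V$, irreducibility of $S^3$, and non-splittability of $L$.
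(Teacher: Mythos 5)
Your proposal is correct, and for the arc count it follows the paper's route: if $n_{12}\ge 2$, a rectangle of $P$ cut along two consecutive arcs becomes a meridian disk of the solid torus $V$ missing $C_3$, so $C_3$ lies in a ball disjoint from $C_1\cup C_2$, contradicting non-splittability (the paper invokes Lemma~\ref{cable} for the essentiality of the rectangle's boundary on $\partial V$; your extra case analysis for an inessential $\partial R$, disposed of by minimality, is a reasonable patch for that step). Where you genuinely diverge is in ruling out loops. The paper observes that an outermost loop $\alpha$ of $P\cap A$ is isotopic to the core of $V$ (a nontrivial torus knot) and simultaneously, via the annulus in $P$, to the $p_3$-cable of $C_3$, and declares this ``not possible''; as stated this leaves open the case where $C_3$ is unknotted, since a nontrivial cable of an unknot is a torus knot. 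Your construction instead glues the outermost annulus $A_0\subset A$ to the annulus $P_1\subset P$ along $c$ to produce an embedded annulus joining $\partial N(C_1)$ to $\partial N(C_3)$ with non-integral slopes on both, so that the Seifert-fibered neighborhood argument opening Case~F forces $C_1\cup C_3$ to be a Hopf link and $C_3$ to be unknotted; you then kill exactly the residual case by compressing $\partial V$ inside the solid torus $E(C_3)$, which must happen on the $V$-side and again splits $C_3$ off, or else exhibits $C_3$ as a core of $V$ and hence knotted. So your argument is slightly longer but actually closes a gap the paper's one-line dismissal glosses over; the remaining unproved ingredients (the Hopf-link consequence of a non-integral essential annulus, and the boundary-incompressibility bookkeeping after putting $P$ and $A$ in minimal position) are exactly the ones the paper itself takes for granted.
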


\begin{proof}

Suppose that $n_3\not= 0$, let $\alpha$ be an outermost loop of $P\cap A$.
Since $\alpha$ is an essential loop of $A$, $\alpha$ is parallel to the core of $V$.
On the other hand, since $\alpha$ is parallel to $l_3$, $\alpha$ is a cable knot of $C_3$,
but this is not possible. Therefore $n_3=0$.

%Next suppose that $n_3=0$.
If $n_{12}\ge 2$, then by Lemma \ref{cable}, a rectangle region $R$ of $P-A$ between two essential arcs gives a meridian disk of $V$.
Since $R\cap C_3=\emptyset$, $C_3$ is contained a 3-ball disjoint from $C_1$ and $C_2$, that is, $L$ is splitable, which is not possible.
\end{proof}

If $n_{12}=1$, then the loop of $P\cap \partial V$ is parallel to $l_3$ and hence it is a cable of $C_3$.
This implies that $C_3$ is a core of $V$.
Therefore, $C_3$ bounds a cabling annulus disjoint from $C_1\cup C_2$, and we can proceed to Case A.

Suppose now that $P\cap A= \emptyset$. Then $P$ lies in the solid torus $V$. Both, $l_1$ and $l_2$ must be longitudes of $V$.
Re-embed $V$ in $S^3$ so that $l_1$ and $l_2$ are preferred longitudes of $V$. By taking disks bounded by $l_1$ and $l_2$, $P$
becomes a disk with a non-integral slope in $C_3$, which is no possible.

\vspace{1em}

Finally consider Case G. In this case the annulus $A$ is parallel to an annulus $A'$ contained in $\partial N(C_1)$, and then $A\cup A'$
bound a solid torus $V$, which must contain one or both of $C_2$, $C_3$. Consider again the intersections between $A$ and $P$.
If $A\cap P \not= \emptyset$, by an argument similar to Case A, we have that that the intersection consist of just an arc, and then $P$ is
divided in two regions $Q_2$ and $Q_3$. One of the regions, say $Q_2$, is contained in $V$, and one component of $\partial Q$ must be
a curve which is a cable of $V$, and then $C_2$ will be a core of $V$. But then, there is an annulus with one boundary component
on $C_1$ with non-integral slope, and one boundary component on $C_2$ with integral slope. We refer then to Case D.

If $P\cap A=\emptyset$, then $P$ is contained in $V$. It follows that $l_1$ is a core of $V$. Re-embed $V$ is $S_3$ such that $A'$ is
a meridional annulus of a trivial knot $C_1'$. Then $P$ union a meridian of $C_1'$ becomes an annulus $A''$. As the boundary of $A''$
consist of curves on non-integral slope on the new $C_2'$ and $C_3'$, it follows that $C_2'\cup C_3'$ form a Hopf link. Now, $C_1'$
is a trivial knot which intersects $A''$ in one point, and then it can be isotoped to lie in a level torus, that is, a torus which intersects
$A''$ in one close curve and is isotopic to $\partial N(C_2')$. Then there must be another annulus $A$, disjoint from $C_1'$ whose boundary
consist of curves on $C_2$ and $C_3$, where at least one of these must have integral slope. Then $C_2$ and $C_3$ bound an annulus disjoint
from $C_1$, with at least one boundary component with a integral slope. We refer then to Case $D$ or $E$. 

Finally note that Cases B and C are impossible, but that the remaining cases are indeed possible. Case A, D, E, G appear in Cases 1 and 2 of
Theorem \ref{characterization}, while Case F appears in Case 1.

\subsection{Toroidal case}
Next, we suppose that $E(L)$ contains an essential torus $T$.

We may assume that $P\cap T$ consists of essential loops in both $P$ and $T$.
Let $A$ be an outermost annulus in $P$ with respect to $P\cap T$, and let $T'$ be an annulus in $E(L)$ obtained from $T$ by an annulus compression along $A$.
Since $p_i>1$, the boundary slope of $T'$ is non-integral.
If $T'$ is boundary parallel in $E(L)$, this implies that $T$ is also boundary parallel in $E(L)$, a contradiction. Then $T'$ must be an essential annulus in $E(L)$,
and then $L$ admits a description as in Theorem \ref{characterization}.

Suppose now $P\cap T=\emptyset$.
Put $S^3=V_1\cup_T V_2$, where $V_1\supset L\cup P$.
Then $V_1$ is a solid torus and $V_2$ is a non-trivial knot exterior.
By exchanging $T$ if necessary, we may assume that $(V_1-int N(L))-P$ does not contain an essential torus. 
We re-embed $V_1$ in $S^3$ so that it is unknotted, that is, $V_2$ is a trivial knot exterior.
Then we obtain an atoroidal 3-component link $L'$ and a 3-punctured sphere $P'$ in $E(L)$ with non-meridional, non-integral slope.
There are two case, either there is an essential annulus in $E(L')$  and then $L'$ and $P'$ satisfy Case 1 or Case 2  of Theorem \ref{characterization}, or $L'$ is a hyperbolic link.
In any case,  $L$ and $P'$ are obtained from $L'$ and $P'$ by a satellite construction.

\section{Proof of Theorem \ref{solution}}

Let $p_1,p_2,p_3$ be positive integers greater than 1 such that $p=\gcd\{p_1,p_2,p_3\}=1$.
We will construct a 3-component link $L=l_1\cup l_2\cup l_3$ in $S^3$ whose exterior contains 3-punctured compact orientable surface $F$ of genus $g$ for a sufficiently large $g$ with $\partial F=f_1\cup f_2\cup f_3$ such that $f_i$ has a slope $q_i/p_i$ on $\partial N(l_i)$.

Let $k_i$ be a $(p_i,q_i)$ cable of $l_i$ for $i=1,2,3$, and put $K=k_1\cup k_2\cup k_3$.
We will construct a Seifert surface for $K$ which is disjoint from the interior of $N(L)$,
and obtain a multibranched surface $X_g(p_1,p_2,p_3)$.

Put $a_{ij}=lk(l_i,l_j)$.
Then we have
\begin{eqnarray*}
lk(l_1,K) &=& q_1 + a_{12}p_2 + a_{13}p_3 \\
lk(l_2,K) &=& q_2 + a_{12}p_1 + a_{23}p_3 \\
lk(l_3,K) &=& q_3 + a_{13}p_1 + a_{23}p_2
\end{eqnarray*}

\begin{lemma}
If $lk(l_1,K)=lk(l_2,K)=lk(l_3,K)=0$, then there exists a Seifert surface $S$ for $K$ which is disjoint from the interior of $N(L)$.
\end{lemma}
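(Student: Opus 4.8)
The plan is to reduce the lemma to a homological statement. A Seifert surface for $K$ disjoint from $\operatorname{int}N(L)$ is the same thing as a properly embedded orientable surface $S\subset E(L)=S^3-\operatorname{int}N(L)$ with $\partial S=K\subset\partial N(L)$. By the classical fact that a $1$-submanifold of the boundary of a compact orientable $3$-manifold $M$ bounds such a surface in $M$ precisely when it is null-homologous in $M$, it suffices to check that $[K]=0$ in $H_1(E(L);\mathbb{Z})$.

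To verify this I would argue as follows. Since $L$ has three components, $H_1(E(L);\mathbb{Z})\cong\mathbb{Z}^3$ is freely generated by the meridians $\mu_1,\mu_2,\mu_3$ of $l_1,l_2,l_3$. The preferred longitude $\lambda_i$ of $l_i$ bounds a Seifert surface for $l_i$ in $S^3$, which meets $N(l_j)$ ($j\ne i$) in $a_{ij}=lk(l_i,l_j)$ meridian disks counted with sign, so $[\lambda_i]=\sum_{j\ne i}a_{ij}\mu_j$ in $H_1(E(L);\mathbb{Z})$. As $k_i$ is the $(p_i,q_i)$-cable of $l_i$, it is isotopic on $\partial N(l_i)$ to the curve $p_i\lambda_i+q_i\mu_i$, so
\[
[k_i]=q_i\mu_i+p_i\sum_{j\ne i}a_{ij}\mu_j\quad\text{in }H_1(E(L);\mathbb{Z}).
\]
Adding these and collecting the coefficient of each $\mu_i$ (using $a_{ij}=a_{ji}$) gives
\[
[K]=\sum_{i=1}^{3}\Bigl(q_i+\sum_{j\ne i}a_{ij}p_j\Bigr)\mu_i=lk(l_1,K)\,\mu_1+lk(l_2,K)\,\mu_2+lk(l_3,K)\,\mu_3,
\]
by the linking-number formulas recorded just before the lemma. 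The hypothesis $lk(l_1,K)=lk(l_2,K)=lk(l_3,K)=0$ then says exactly that $[K]=0$, and the lemma follows.

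If one prefers an explicit construction rather than quoting the boundary criterion, I would instead start from a Seifert surface $\hat S\subset S^3$ for $K$ obtained by Seifert's algorithm and clean up its intersections with $N(L)$: using innermost-disk surgeries (legitimate since $S^3$ is irreducible) one arranges that $\hat S\cap N(l_i)$ is a union of meridian disks of $N(l_i)$, whose algebraic count equals the intersection number $l_i\cdot\hat S=lk(l_i,K)=0$; consecutive meridian disks of opposite sign along $l_i$ can then be cancelled by replacing them with a band pushed into $\partial N(l_i)$, keeping the surface embedded and decreasing the number of disks, and iterating produces a surface contained in $E(L)$. Either way, the homology computation above is the routine part; the one point that genuinely requires care is the passage from ``$[K]$ is a boundary in $H_1$'' to ``$K$ bounds an embedded orientable surface'', which is exactly where the cut-and-paste above (or the standard duality argument realizing the Poincar\'e dual class by a map to $S^1$) is needed.
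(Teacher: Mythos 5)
Your argument is correct, and your primary route is genuinely different from the paper's. The paper proceeds by direct geometric surgery: it starts with an arbitrary Seifert surface for $K$ in $S^3$, uses $lk(l_i,K)=0$ to cancel, by successive tubing along $l_i$, pairs of intersection points of $S$ with $l_i$ having opposite signs, and then removes the residual intersections with the cabling annuli $A_i$ by an isotopy across $k_i$ followed by an oriented double curve sum to restore embeddedness. Your second sketch (cancelling meridian disks of opposite sign by bands pushed into $\partial N(l_i)$) is essentially this same construction. Your main argument, by contrast, reduces everything to the standard criterion that a closed oriented $1$-manifold in $\partial M$ bounds a properly embedded oriented surface in a compact oriented $3$-manifold $M$ if and only if it is null-homologous in $M$, and then verifies $[K]=0$ in $H_1(E(L);\mathbb{Z})\cong\mathbb{Z}^3$ by the computation $[k_i]=q_i\mu_i+p_i\sum_{j\ne i}a_{ij}\mu_j$, whose sum has coefficients exactly $lk(l_i,K)$. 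This is cleaner and makes transparent that the three linking-number conditions are precisely the homological obstruction; what it costs is that the entire geometric content is outsourced to the boundary criterion, whose proof (defining the map to $S^1$ on an annular neighborhood of $K$ in $\partial E(L)$ first, extending using $[K]=0$, and taking the preimage of a regular value, or equivalently the cut-and-paste realization) is exactly the kind of argument the paper carries out by hand. You correctly flag this as the one step needing care. Two small points worth adding: if ``Seifert surface'' is meant to be connected, tube the components of the resulting surface together inside $E(L)$; and note that the surface produced by the criterion is automatically properly embedded with boundary exactly $K$ on $\partial N(L)$, which is the part the paper has to work for with the annuli $A_i$.
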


\begin{proof}
Let $S$ be a Seifert surface for $K$.
We may assume that $S$ intersects $L$ transversely.
Since $lk(l_i,K)=0$, by successively tubing $S$ along $l_i$ for adjacent two points with opposite orientation, we obtain a Seifert surface for $K$ which is disjoint from $L$.

Let $A_i$ be an annulus in $N(k_i)$ connecting $k_i$ and $l_i$. If $A_1$ is disjoint from $S$, then $S$ can be isotoped to be disjoint from $int N(L)$. If not, then $S$ intersects $A_i$ in curves isotopic to $k_i$. By isotoping $S$ across $k_i$ we get a surface $S'$ disjoint from $int N(L)$ but which may have self-intersections. By  doing an oriented double curve sum along the curves of intersections, we get a Seifert surface as desired.
\end{proof}

To get $lk(l_1,K)=0$, just put $q_1=-a_{12}p_2 - a_{13}p_3$. 
But we need also that $\gcd(p_1,q_1)=1$.
We consider similarly for $l_2,l_3$.
This is solved by the following Lemma.

\begin{lemma}\label{algebraic}
Let $p_1,p_2,p_3$ be positive integers greater than 1 such that $\gcd(p_1,p_2,p_3)=1$.
Then there exists integers $a_{12},a_{13},a_{23}$ such that
\begin{eqnarray*}
\gcd(p_1,a_{12}p_2+a_{13}p_3) &=&1\\
\gcd(p_2,a_{12}p_1+a_{23}p_3) &=&1\\
\gcd(p_3,a_{13}p_1+a_{23}p_2) &=&1
\end{eqnarray*}
\end{lemma}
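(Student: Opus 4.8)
The plan is to translate the three gcd conditions into congruences modulo the primes dividing $p_1p_2p_3$, satisfy them one prime at a time, and glue the local solutions together with the Chinese Remainder Theorem. The starting observation is that $\gcd(p_1,\,a_{12}p_2+a_{13}p_3)>1$ holds precisely when some prime $q$ divides both $p_1$ and $a_{12}p_2+a_{13}p_3$, and likewise for the other two conditions. So it is enough to produce integers $a_{12},a_{13},a_{23}$ such that for every prime $q$ dividing $p_1p_2p_3$ we have: $q\nmid a_{12}p_2+a_{13}p_3$ whenever $q\mid p_1$; $q\nmid a_{12}p_1+a_{23}p_3$ whenever $q\mid p_2$; and $q\nmid a_{13}p_1+a_{23}p_2$ whenever $q\mid p_3$. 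Since $p_1p_2p_3$ has only finitely many prime divisors, this is a finite list of conditions, one per prime.

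Fix such a prime $q$ and regard $(a_{12},a_{13},a_{23})$ as a vector in $\mathbb{F}_q^3$. Reducing mod $q$, the triples forbidden at $q$ are those lying on at least one of the linear subspaces $H_1,H_2,H_3\subseteq\mathbb{F}_q^3$ given by the vanishing of $a_{12}p_2+a_{13}p_3$ (relevant only if $q\mid p_1$), of $a_{12}p_1+a_{23}p_3$ (only if $q\mid p_2$), and of $a_{13}p_1+a_{23}p_2$ (only if $q\mid p_3$), respectively. This is where $\gcd(p_1,p_2,p_3)=1$ enters, and it does two jobs at once: (i) $q$ divides at most two of $p_1,p_2,p_3$, so at most two of $H_1,H_2,H_3$ are actually imposed; and (ii) each imposed $H_i$ is a \emph{proper} subspace of $\mathbb{F}_q^3$ --- for instance $H_1$ would equal all of $\mathbb{F}_q^3$ only if $q$ divided both $p_2$ and $p_3$, which together with $q\mid p_1$ is impossible. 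Since a vector space over any field is never the union of two proper subspaces, the forbidden set for $q$ is a proper subset of $\mathbb{F}_q^3$, so a residue triple good for $q$ exists.

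It then remains to globalize: enumerate the distinct primes $q_1,\dots,q_r$ dividing $p_1p_2p_3$, choose a good residue triple modulo each $q_j$ as above, and apply the Chinese Remainder Theorem separately in each of the three coordinates to obtain integers $a_{12},a_{13},a_{23}$ reducing to the chosen triple modulo every $q_j$; by the first paragraph these integers satisfy the three required identities. I expect no serious obstacle in carrying this out; the one point that has to be handled with care is step (ii), where, even though the relevant linear forms degenerate when $q$ divides two of the $p_i$, one must see that $\gcd(p_1,p_2,p_3)=1$ prevents any of them from degenerating all the way to the zero form --- which is exactly what makes the elementary ``two proper subspaces cannot cover'' argument applicable.
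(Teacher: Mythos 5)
Your proof is correct, but it takes a genuinely different route from the paper's. The paper argues by explicit construction: after relabelling $a=p_1$, $b=p_2$, $c=p_3$ and assuming without loss of generality that $c$ is odd, it factors $a=\alpha c_1$, $b=\beta c_2$, $c=\gamma c_3$ (splitting off the part of each integer whose prime factors divide one of the others), writes down the concrete choices $x=\alpha\beta$, $y=-b\alpha\beta+\gamma$, $z=-a\alpha\beta+\gamma$, and verifies the three coprimality claims by a prime-by-prime case analysis; the parity normalization is needed there because a factor of $2$ appears in $N_3=-2ab\alpha\beta+(a+b)\gamma$. You instead localize at each prime $q\mid p_1p_2p_3$, observe that $\gcd(p_1,p_2,p_3)=1$ does exactly two jobs --- at most two of the three conditions are active at $q$, and each active condition cuts out a \emph{proper} linear subspace of $\mathbb{F}_q^3$ --- then invoke the fact that a vector space over a field is never the union of two proper subspaces, and glue the local choices with the Chinese Remainder Theorem. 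That subspace fact is the right tool here: for $q=2$ the naive count ($2q^2$ versus $q^3$) is inconclusive, so the linear-algebra argument (or the observation that the two hyperplanes meet in a line) is genuinely needed, and you use it correctly. Your approach is less explicit but more conceptual and robust: it requires no parity normalization and visibly generalizes to more components and variables, whereas the paper's formulas buy a concrete triple $(a_{12},a_{13},a_{23})$ at the cost of a somewhat ad hoc verification.
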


\begin{proof}
For simplicity of notation, we rewrite $a=p_1,\ b=p_2,\ c=p_3$ and $x=a_{12},\ y=a_{13},\ z=a_{23}$.
Thus Lemma \ref{algebraic} is rewritten as

\begin{lemma}
For given integers $a,b,c$, $(a,b,c)=1$, $a,b,c>1$,
there exist integers $x,y,z$ such that 
\begin{eqnarray*}
(a,N_1)=1, \text{where}\ N_1=bx+cy\\
(b,N_2)=1, \text{where}\ N_2=ax+cz\\
(c,N_3)=1, \text{where}\ N_3=ay+bz
\end{eqnarray*}
\end{lemma}

Without loss of generality, we can assume that $c$ is an odd number.

Let $a=\alpha c_1$ where $(a,c)=1$ and any prime factor of $c_1$ divides $c$, possibly $\alpha=1$ or $c_1=1$.
Let $b=\beta c_2$ where $(\beta,c)=1$ and any prime factor of $c_2$ divides $c$, possibly $\beta=1$ or $c_2=1$.
Let $c=\gamma c_3$ where $(\gamma,a)=1$, $(\gamma,b)=1$ and any prime factor of $c_3$ divides $a$ or $b$. Note that any prime factor of $c_3$ cannot divide both $a$ and $b$ since $(a,b,c)=1$.
Let 
\begin{eqnarray*}
x &=& \alpha \beta\\
y &=& -b\alpha \beta +\gamma\\
z &=& -a \alpha \beta +\gamma
\end{eqnarray*}
Then 
\begin{eqnarray*}
N_1 &=& b\alpha \beta -cb \alpha \beta +c\gamma\\
N_2 &=& a\alpha \beta -ca \alpha \beta +c\gamma\\
N_3 &=& -ab \alpha \beta +a \gamma -ba \alpha \beta +b\gamma\\
 &=& -2ab \alpha \beta +a\gamma +b\gamma
\end{eqnarray*}

\begin{claim}
$(N_1,a)=1$
\end{claim}

\begin{proof}
Let $p$ be a prime factor of $a$.
Remember $a=\alpha c_1$.
\begin{description}
\item[Case 1] $p|\alpha$\\
If $p|N_1$, then $p|N_1-b\alpha\beta+cb\alpha\beta$, that is, $p|c\gamma$, but this is not possible since $(\alpha,c)=1$ and $(\alpha,\gamma)=1$.

\item[Case 2] $p|c_1$\\
If $p|N_1$, then $p|b\alpha\beta$, but $p\not|\alpha$ since $(\alpha,c_1)=1$, and $p\not|\beta$ since $p|a$ and $p|c$.
\end{description}
\end{proof}

Similarly we have

\begin{claim}
$(N_2,b)=1$
\end{claim}

\begin{claim}
$(N_3,c)=1$
\end{claim}

\begin{proof}
Let $p$ be a prime factor of $c$.
Remember $c=\gamma c_3$.
\begin{description}
\item[Case 1] $p|\gamma$\\
If $p|N_3$, then $p|2ab\alpha\beta$, but $p\not| 2$ since $p$ is odd, and $p\not|a$, $p\not|b$ since $(\gamma,a)=1$, $(\gamma,b)=1$.
\item[Case 2] $p|c_3$\\
If $p|N_3$, then $p|a$ or $p|b$, say $p|a$.
It follows that $p|b\gamma$.
But $p\not | b$ since otherwise $a,b,c$ would have a common factor, and $p\not | \gamma$ by the choice of $\gamma$ and $c_3$.
\end{description}
\end{proof}

This completes the proof.
\end{proof}

Finally, given $a_{12},a_{13},a_{23}$, it is easy to find a link $L=l_1 \cup l_2 \cup l_3$
such that $a_{ij}=lk(l_i,l_j)$. For example, a certain closed pure 3-braid will suffice.

%\bigskip
%\noindent{\bf Acknowledgements.}

\bibliographystyle{amsplain}

\end{document}